\newtheorem{theorem}{Theorem}
\newtheorem{prop}{Proposition}
\newtheorem{cor}{Corollary}
\newtheorem{lemma}{Lemma}
\newtheorem*{main lemma}{The main lemma}
\newtheorem{definition}{Definition}
\title[Each polytope in $\mathbb{R}^3$ has a point with ten normals]{Each  generic polytope in $\mathbb{R}^3$ has a point with ten normals to the boundary }
\author{Ivan Nasonov, Gaiane Panina}
\address{I. Nasonov: St. Petersburg State University, St. Petersburg department of Steklov institute of mathematics  wanua-nasonov-i04@yandex.ru; G. Panina: St. Petersburg department of Steklov institute of mathematics
 gaiane-panina@rambler.ru  }
\keywords{Morse theory, bifurcation, critical point, binormal. \ \   MSC  52B70 }
\date{\today}
\begin{document}

\begin{abstract}It is conjectured since long that each smooth convex body $\mathbf{P}\subset \mathbb{R}^n$ has a point in its interior   which belongs to at least $2n$ normals from different points on the boundary of $\mathbf{P}$. The conjecture is proven for $n=2,3,4$.

We treat an analogous problem for convex polytopes in $\mathbb{R}^3$  and prove that each generic  polytope  has a point in its interior with at least $10$ normals to the boundary.  

This  bound is exact: there exists a tetrahedron with no more than $10$ normals emanating from a point in its interior.

The proof is based on piecewise linear analog of Morse theory, analysis of bifurcations, and some combinatorial tricks.
\end{abstract}

\maketitle

\section{Introduction}

\medskip

It is conjectured since long that for any smooth convex body $\mathbf{P}\subset \mathbb{R}^n$ there exists a point in its interior   which belongs to at least $2n$ normals from different points on the boundary of $\mathbf{P}$. The conjecture is a simple exercise for $n=2$. There are two proofs for  $n=3$:  a geometrical proof by E. Heil \cite{Heil},  and a topological proof by J. Pardon \cite{Pardon}. 

So, each $3$-dimensional smooth body has a point with at least $6$ emanating normals. This is an exact bound: an ellipsoid (with all different lengths of axes) has at most $6$ normals from a point in its interior.

 For   $n=4$, the conjecture was proven by J. Pardon \cite{Pardon}, and (to the best of our knowledge) nothing is known for higher dimensions.

In this paper we solve a similar problem for  compact convex polytopes with non-empty interior. 

\begin{definition}  Let $\mathbf{P}\subset \mathbb{R}^3$  be a compact convex polytope with non-empty interior, let $y\in Int~\mathbf{P}$. 
 Let h be a support plane to $\mathbf{P}$ and a point $z\in h\cap \mathbf{P}$  be such that the line $yz$ is orthogonal to $h$.
Then  $yz$ is called a \textit{normal to the boundary of $\mathbf{P}$ emanating from }$y$. The point $z$ is called the\textit{ base }of the normal.
\end{definition}

Our main result is:

\begin{theorem}\label{Thmmain}
  Each generic\footnote{Genericity is defined in Definition \ref{DefGen}.}  polytope in $\mathbb{R}^3$ has a point with (at least) $10$ emanating normals to the boundary.
\end{theorem}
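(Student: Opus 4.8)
The plan is to recast the statement as a counting problem in piecewise-linear Morse theory and then to isolate the single quantity that really matters. For $y\in\mathrm{Int}\,\mathbf{P}$ consider the squared-distance function $f_y(x)=\|x-y\|^2$ on $\partial\mathbf{P}\cong S^2$; the normals emanating from $y$ are exactly the critical points of $f_y$ in the stratified sense. First I would classify these critical points by the stratum carrying the foot of the perpendicular. A foot in the relative interior of a $2$-face is a local minimum (index $0$), since the boundary is locally flat there. A foot on a vertex $v$ is a local maximum (index $2$), occurring precisely when $y\in v-\mathcal{N}(v)$, where $\mathcal{N}(v)$ is the outward normal cone. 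A foot in the relative interior of an edge $e$ is a saddle (index $1$): along $e$ it is the minimum given by the foot of the perpendicular, while transversally the distance strictly decreases onto both adjacent faces, because for such $y$ the direction $\mathrm{foot}-y$ lies in the interior of the $2$-dimensional normal wedge $\mathcal{N}(e)$. For generic $\mathbf{P}$ and generic $y$ these critical points are nondegenerate and exhaust the critical set.

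The reduction is then immediate from the Morse relations on $S^2$. Writing $c_i$ for the number of critical points of index $i$, we have $c_0-c_1+c_2=\chi(S^2)=2$, so the total number of normals is $N(y)=c_0+c_1+c_2=2+2\,c_1(y)$. Hence the theorem is equivalent to the assertion that \emph{some} interior point receives orthogonal normals from at least four edges. It is worth recording that each edge-region $U_e=\{\,y:\ \text{foot on }e \text{ lies in }\mathrm{relint}(e)\ \text{and}\ \mathrm{foot}-y\in\mathcal{N}(e)\,\}$ is convex: the orthogonal projection of $y$ onto the line of $e$ is affine in $y$, so $U_e$ is cut out by four linear inequalities (the slab condition for the foot, and the two wedge conditions). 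The goal becomes the existence of a point of covering depth at least $4$ in the arrangement of convex bodies $\{U_e\}_{e\in E}$.

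Next I would carry out the bifurcation analysis announced in the abstract. As $y$ moves and its foot of perpendicular to some stratum crosses a lower-dimensional face (a wall in $\mathrm{Int}\,\mathbf{P}$), the critical configuration changes by a standard fold: either a face-minimum and an edge-saddle are created or annihilated together, or an edge-saddle and a vertex-maximum are. Thus $c_1$, and with it $N$, jumps by exactly $\pm 1$ (resp. $\pm 2$) across each wall, while for $y$ close to $\partial\mathbf{P}$ the count is small. This decomposes $\mathrm{Int}\,\mathbf{P}$ into open cells on which $N$ is constant and furnishes the adjacency structure on which the final combinatorial argument runs, as well as the control near the boundary needed to anchor any global count.

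The main obstacle is the last step: forcing four edge-normals to coexist. The Morse bookkeeping only supplies parity and the local fold structure; it does not by itself produce a point of depth $4$, since the average of $c_1$ over $\mathrm{Int}\,\mathbf{P}$ can be well below $4$ (near a large face almost no edges are seen). The argument must therefore be genuinely global and use convexity of $\mathbf{P}$ together with the Euler relation $V-E+F=2$ for the face lattice. Concretely, I would assume for contradiction that $c_1(y)\le 3$ (equivalently $N\le 8$) for every generic $y$, and combine the wall/cell adjacency with a global depth count of the convex arrangement $\{U_e\}$ to reach a contradiction, the Euler-characteristic and convexity constraints ruling out a covering of the interior with edge-multiplicity at most $3$. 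The regular tetrahedron (whose center already sees all six edges, giving $N=14$) shows the bound is far from tight in symmetric cases, while the flat tetrahedron realizing $N=10$ shows that $c_1=4$ is the unavoidable minimum and pins down where the combinatorial argument must be sharp.
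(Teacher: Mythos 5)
Your reduction is sound and matches the paper's preliminary setup: critical points of $SQD_y$ sit on faces (minima), edges (saddles), vertices (maxima); the Euler relation gives $N(y)=2+2c_1(y)$; the edge-regions (the paper's active regions $\mathcal{AR}(e)$) are convex, cut out by two blue and two red sheets; and crossing a sheet of the bifurcation set creates or annihilates one saddle together with one extremum. All of this is Section 2 of the paper (itself quoted from the earlier work of Nasonov--Panina--Siersma), and it correctly converts the theorem into: \emph{some interior point projects admissibly onto at least four edges}.

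But that is where your proposal stops, and that is precisely where the proof has to begin. Your final paragraph names a strategy (assume $c_1\le 3$ everywhere, then derive a contradiction from a ``global depth count'' of the convex arrangement $\{U_e\}$ plus the Euler relation $V-E+F=2$) without executing any step of it, and there is no indication such a counting argument exists: depth of a family of convex bodies is not controlled by the combinatorics of the face lattice alone, and the insphere-center argument already shows the cheap global counts stall at $8$ normals, not $10$. The paper's actual route is quite different and genuinely constructive: (i) a dichotomy at each vertex $V$ via the spherical link $Spher(V)$ --- if some vertex is \emph{nice} (its link admits a point with three short maxima), one sends a point $y$ from $V$ along a suitable ray and checks by a bifurcation case analysis that $10$ normals must appear (Proposition \ref{propNice}); (ii) if every vertex is \emph{skew}, a structure theorem for skew spherical polygons (Proposition \ref{propSkew}: exactly two acute angles, at spherical distance $>\pi/2$) forces the acute edges of $\mathbf{P}$ to form a closed curve $\mathcal{A}$ on $\partial\mathbf{P}$; (iii) a critical-point argument for the (nonsmooth) width function produces a binormal $VW$ with $V$ a vertex and $W$ interior to an edge; (iv) sliding $y$ along this binormal and ruling out saddle deaths yields $4$ minima and $2$ maxima, or, in the degenerate case where $VW$ lies on a face, three saddles on acute edges, which via the circle $\mathcal{A}$ force three maxima --- either way at least $10$ critical points. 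None of these ingredients (nice/skew dichotomy, spherical geometry of skew polygons, the curve $\mathcal{A}$, the vertex--edge binormal) is present or replaceable by the depth-counting heuristic you propose, so the proposal has a genuine gap at the main step.
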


 Three necessary remarks are:
 \begin{itemize}
   \item In the smooth case the bases of the normals to $\partial \mathbf{P}$ emanating from $y$ are  the critical points of the \textit{squared distance function } $$SQD_y(x)=|x-y|^2:\partial \mathbf{P}\rightarrow \mathbb{R}.$$ Generically,  $SQD_y$ is a Morse function, thus Morse theory gives a tool.
With some efforts, Morse-theoretic machinery  extends also to polytopes \cite{NasPanSiersma}.  So counting normals to the boundary is equivalent to counting critical points of $SQD_y$.
   \item Each polytope in $\mathbb{R}^3$ has a point with at least $8$ normals to the boundary. This follows  straightforwardly from Morse counts, see \cite{NasPanSiersma}: let $y$ be the center of the sphere of maximal radius contained in  $\mathbf{P}$. Then the function $SQD_y$ has at least four local minima (the tangent points) and at least one maximum, therefore at least three saddles.
   \item The bound in Theorem \ref{Thmmain} is exact: there exists a tetrahedron with no more than $10$ normals from a point in its interior \cite{NasPanSiersma}.
 \end{itemize}
 
 \medskip
 
 The \textbf{main idea in a nutshell} looks like this:  (1)  take a very special point $y$ in the polytope $\mathbf{P}$ with some guaranteed number of emanating normals. (2) Force $y$ to move along a carefully chosen straightline  trajectory. At bifurcation moments od the movement, the emanating normals  born or die. Analyze this birth-death process.  
 
 We use this idea twice: in Proposition \ref{propNice}, where $y$ travels along a special ray emanating from a vertex of $\mathbf{P}$, and  in the proof of the main theorem, where $y$ travels along a special (saddle-type) binormal of $\mathbf{P}$.  
 
Finally, a combinatorial trick puts the pieces together.

\medskip

\newpage

\section{Definitions and preliminaries}\label{SecDefPrelim}

Most of the definitions and lemmata of the section are borrowed from \cite{NasPanSiersma}; we repeat them here for the sake of completeness.
\subsection{Active regions, bifurcation set and its sheets}
Let $\mathbf{P}\subset \mathbb{R}^3$ be a compact convex polytope with non-empty interior.

Let $F$ be a face, an edge, or a vertex of a polytope $\mathbf{P}$. The \textit{active region} $\mathcal{AR}(F)$ is the set of all points $y\in Int\, \mathbf{P}$ such that $F$ contains the base of some normal emanating from $y$. 

The\textit{ bifurcation set}  $\mathcal{B}(\mathbf{P})$ is the union of the boundaries of active regions. It is the piecewise linear counterpart of the focal set that appears in the smooth case.

The bifurcation set is piecewise linear. Its  parts not lying on $\partial \mathbf{P}$ (\textit{sheets}) fall into two types, red and blue, see  Fig. \ref{fig:ActiveRegion}.

\begin{figure}[h]
 \includegraphics[width=0.6\linewidth]{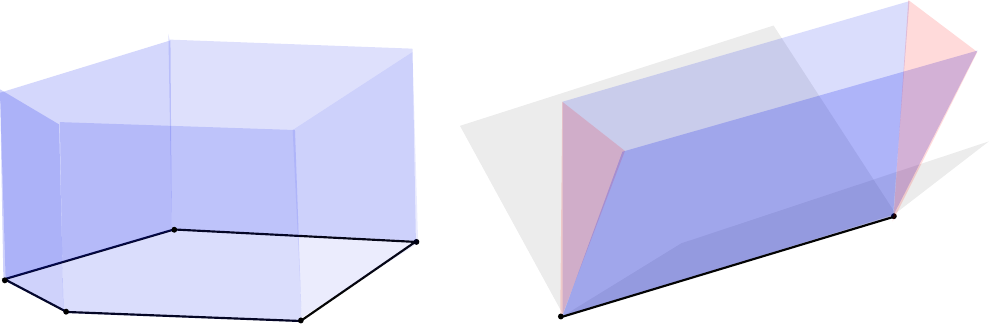}
  \caption { Active regions of a {face and an edge}}
  \label{fig:ActiveRegion}
\end{figure}

\medskip

That is, 
\begin{enumerate}
  \item The active region of a face $F$ is the intersection of the infinite  right angled prism  based on $F$ and  the polytope $\mathbf{P}$.
  
  The side faces  of the prism are called \textit{ blue  sheets} of  $\mathcal{B}(\mathbf{P})$.
  \item The active region of an edge $E$  sharing two faces $F_1$ and $F_2$ is the intersection of  the polytope $\mathbf{P}$ with the wedge-like polyhedron bounded by 
  two planes  containing $E$ and orthogonal to $F_1$ and $F_2$ (these are blue sheets), and another two planes  containing the endpoints of $E$ and orthogonal to $E$ (\textit{red sheets}). 
  \item The active region of a vertex $V$ is the inner normal cone of $\mathbf{P}$ at the vertex $V$ (bounded by red sheets)  intersected with the polytope $\mathbf{P}$. 
\end{enumerate}

\medskip

Although the function $SQD_y$ is  non-smooth,
it can be treated as a Morse function.  So counting critical points  of $SQD_{y}$ amounts to counting normals, exactly as it is in the smooth  case.

\begin{lemma}

For $y \notin \mathcal{B}(\mathbf{P})$, we have:
\begin{enumerate}
  \item Local maxima of $SQD_y$  are attained at some  vertices of $\mathbf{P}$.
  \item Local minima of $SQD_y$ are attained at some faces of $\mathbf{P}$.
  \item Saddles of $SQD_y$ are attained at some  edges of $\mathbf{P}$.
\end{enumerate}

\end{lemma}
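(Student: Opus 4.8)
The plan is to analyze the point $y \notin \mathcal{B}(\mathbf{P})$ locally by understanding the structure of $SQD_y$ near each face, edge, and vertex of $\partial\mathbf{P}$, and then identify where critical points can occur. Since $y$ avoids the bifurcation set, the squared distance function behaves as a genuine Morse function, so every critical point is nondegenerate and lies in the relative interior of some face, edge, or vertex — never on the boundary of an active region. The strategy is to stratify $\partial\mathbf{P}$ into its open faces, open edges, and vertices, and to compute the local index of $SQD_y$ on each stratum, reading off whether a given stratum can carry a minimum (index $0$), a saddle (index $1$), or a maximum (index $2$).

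First I would treat an open two-dimensional face $F$. The restriction of $SQD_y$ to the plane containing $F$ is a strictly convex quadratic, whose unique critical point is the orthogonal projection $z$ of $y$ onto that plane. This projection gives a genuine critical point of $SQD_y$ on $\partial\mathbf{P}$ precisely when $z$ lies in the relative interior of $F$, which is exactly the condition $y \in \mathcal{AR}(F)$ with the normal foot interior to $F$. Because the quadratic is convex in both planar directions and the outward normal direction carries no competing behaviour, such a critical point is a local minimum (index $0$). This establishes statement (2): minima sit on faces.

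Next I would handle a vertex $V$. Near $V$, the function $SQD_y$ restricted to $\partial\mathbf{P}$ is governed by the position of $y$ relative to the normal cone at $V$. The point $y$ produces a critical point at $V$ exactly when $y$ lies in the inner normal cone of $\mathbf{P}$ at $V$ (the active region of $V$), and in that case moving away from $V$ along any incident edge increases the distance, so $V$ is a local maximum (index $2$), giving statement (1). The edge case is the one requiring the most care and is the expected main obstacle: along an open edge $E$ shared by faces $F_1, F_2$, one must show that the one-dimensional critical point along $E$ combines with the transverse behaviour across $E$ to produce a saddle rather than an extremum. Concretely, the restriction of $SQD_y$ to the line containing $E$ is a convex quadratic with an interior minimum (when $y$ lies in the red-sheet slab over $E$), so the index contribution along $E$ is $0$; meanwhile, for $y$ in the wedge between the blue sheets orthogonal to $F_1$ and $F_2$ but outside both face prisms, the distance to $\partial\mathbf{P}$ decreases as one rotates across the edge into either adjacent face, contributing index $1$ transversally. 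Verifying that these two contributions always yield total index exactly $1$ — and that no edge critical point can masquerade as a min or max when $y$ avoids $\mathcal{B}(\mathbf{P})$ — is the delicate step, since it rests on the precise geometry of the active region of the edge as a wedge bounded by the two blue and two red sheets described above. Assembling the three stratum computations then yields the lemma in full.
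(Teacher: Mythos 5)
First, a remark on the comparison itself: the paper does not prove this lemma at all --- it is one of the statements imported from \cite{NasPanSiersma} ``for the sake of completeness'' --- so your proposal can only be judged on its own merits. Your stratified strategy (classify the possible critical behaviour of $SQD_y$ on open faces, open edges, and vertices separately) is the natural one, and your face case is correct. But the argument as written has genuine gaps, concentrated exactly where you yourself locate the difficulty. The edge case --- the content of item (3) and the heart of the lemma --- is never actually proved: you describe the expected transverse behaviour and then say that verifying that the contributions ``always yield total index exactly $1$'' is ``the delicate step'', without carrying it out. That verification is in fact a short computation you should include: if $z$ is an interior point of $E$ with $y-z\perp E$, write $y-z=\alpha n_1+\beta n_2$, where $n_i$ is the inner unit normal of $F_i$; the condition $y\notin\mathcal{B}(\mathbf{P})$ forces $\alpha,\beta>0$, and for the unit vector $v_1$ pointing from $z$ into $F_1$ (perpendicular to $E$) one gets $\langle y-z,v_1\rangle=\beta\langle n_2,v_1\rangle>0$, since convexity and non-coplanarity of $F_1,F_2$ give $\langle n_2,v_1\rangle>0$; symmetrically $\langle y-z,v_2\rangle>0$. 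Hence $SQD_y$ strictly decreases to first order into both faces while strictly increasing along $E$: a saddle, never an extremum.

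Second, the hypothesis you attach to that transverse claim, ``for $y$ in the wedge \dots\ but outside both face prisms,'' is wrong and would itself open a gap: the wedge of an edge and the prisms of its two faces are not disjoint. For a cube, the centre lies in every face prism and in every edge wedge simultaneously, and every edge carries a saddle seen from the centre; restricting the analysis to the complement of the prisms would leave precisely this most common configuration unclassified. The correct hypothesis is simply that $y$ lies strictly between the two blue sheets; membership in the face prisms is irrelevant. Finally, your vertex argument is inverted: you write that for $y$ in the inner normal cone ``moving away from $V$ along any incident edge increases the distance, so $V$ is a local maximum'' --- an increase away from $V$ would make $V$ a local \emph{minimum}. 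What is true is that for $y$ strictly inside the cone one has $\langle y-V,u\rangle>0$ for every direction $u$ of the tangent cone at $V$, so $|x-y|^2-|V-y|^2=-2\langle y-V,\,x-V\rangle+|x-V|^2<0$ for $x\in\partial\mathbf{P}$ near $V$; note also that one must control all tangent directions, not only the edge directions (this follows because the linear function $u\mapsto\langle y-V,u\rangle$ attains its minimum over the link of $V$ at a vertex of the link). With these three repairs --- the completed edge computation, the corrected hypothesis, and the corrected vertex inequality --- your stratified argument does prove the lemma.
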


\textbf{Convention:}  from now on, we call local maxima (local minima) just maxima and minima for short.

\begin{lemma}\label{PropBifurc}

If $y$ crosses transversally one sheet of $\mathcal{B}(\mathbf{P})$, a pair of critical points of  $SQD_{y}$ either dies or is born.
\begin{enumerate}
  \item Crossing a blue sheet amounts to birth-death of a minimum and a saddle.
  \item Crossing a  red sheet amounts to birth-death of a maximum and a saddle.
\end{enumerate}
\end{lemma}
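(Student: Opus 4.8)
The plan is to reduce the bifurcation analysis to the combinatorics of how the active regions fit together, and then to confirm the Morse-theoretic content by a local normal form. By the preceding lemma together with the explicit description of active regions, a minimum of $SQD_y$ sits on a face $F$ exactly when $y$ lies in the interior of $\mathcal{AR}(F)$, a saddle sits on an edge $E$ exactly when $y\in\mathrm{int}\,\mathcal{AR}(E)$, and a maximum sits at a vertex $V$ exactly when $y\in\mathrm{int}\,\mathcal{AR}(V)$; in each case the base of the normal is the orthogonal projection of $y$ onto $\mathrm{aff}(F)$, onto the line of $E$, or onto $V$. Thus a critical point is born or dies precisely when $y$ crosses the boundary of the corresponding active region, that is, a sheet of $\mathcal{B}(\mathbf{P})$.

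First I would record the incidence of the sheets, reading it off the shapes of the active regions. A blue sheet is the plane through an edge $E$ orthogonal to one of its two adjacent faces, say $F$; by construction it is simultaneously a side facet of the prism $\mathcal{AR}(F)$ and one of the two blue walls of the wedge $\mathcal{AR}(E)$. Hence a blue sheet is the common boundary of exactly one face-region and one adjacent edge-region. A red sheet is the plane through an endpoint $V$ of an edge $E$ orthogonal to $E$; it is a red wall of $\mathcal{AR}(E)$ and, being orthogonal to an edge through $V$, also a facet of the inner normal cone $\mathcal{AR}(V)$. Hence a red sheet is the common boundary of exactly one edge-region and one incident vertex-region. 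Transversality of the crossing (genericity) guarantees that $y$ meets no other active-region boundary at the same instant, so exactly the two incident critical points are involved.

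Next I would pin down the birth--death by a local model. For a blue sheet, put $E$ on the $x_2$-axis with $F\subset\{x_3=0\}$, $F=\{x_1\ge 0\}$, and write $y=(a,b,c)$. The face-minimum on $F$ is the foot $(a,b,0)$, present iff $a>0$; the edge-saddle is the foot $(0,b,0)$, present iff $y$ lies in the inner normal cone of $E$, one of whose walls (the one orthogonal to $F$) is exactly $\{a=0\}$. At an interior point of this sheet the other defining inequality of $\mathcal{AR}(E)$ is strict, so on one side both the minimum and the saddle are present, on the other side both are absent, and at the wall $a=0$ they coincide at the point $(0,b,0)\in E$, a single degenerate critical point. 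A short computation of directional derivatives (equivalently, of the lower link) shows the two nondegenerate critical points have PL-Morse indices $0$ and $1$, so crossing a blue sheet is the birth or death of a minimum--saddle pair. The red-sheet case is identical after placing $E$ on the $x_2$-axis with endpoint $V$ at the origin and the red wall at $\{x_2=0\}$: the edge-saddle $(0,b,0)$, present for $b>0$, and the vertex-maximum at $V$, present when $y$ lies in the normal cone of $V$, merge at $V$ as $b\to 0$, with indices $1$ and $2$, giving the birth or death of a saddle--maximum pair.

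The main obstacle I anticipate is the PL-Morse index bookkeeping at the degenerate instant, especially for non-simple polytopes. One must verify that the edge critical point is genuinely a saddle (index $1$) throughout the crossing and that the coincidence at $a=0$ (resp. $b=0$) is a nondegenerate fold rather than a more complicated collision. At a vertex where more than three faces meet, the vertex figure and its normal cone are not a simplex cone, so computing the index of the vertex critical point and checking that only the single saddle--maximum pair changes requires analysing the lower link of $V$ in $\partial\mathbf{P}$ rather than a single directional-derivative sign. Establishing this uniformly is the delicate point; everything else is the combinatorial incidence above together with the two explicit normal forms.
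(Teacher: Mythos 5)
The paper itself does not prove this lemma: it is one of the statements imported verbatim from \cite{NasPanSiersma} (``Most of the definitions and lemmata of the section are borrowed from \cite{NasPanSiersma}''), so there is no in-paper proof to compare against, and your argument must stand on its own. It does. Your key structural observation is the right one and is correct: each blue sheet is simultaneously a prism wall of $\mathcal{AR}(F)$ and a wall of the wedge $\mathcal{AR}(E)$ (both walls equal $E+\mathbb{R}_{\ge 0}\nu_F$, $\nu_F$ the inner normal of $F$), each red sheet is simultaneously the end-wall of the wedge $\mathcal{AR}(E)$ and the facet of the normal cone $\mathcal{AR}(V)$ corresponding to $E$ (both equal $V+N_E^{in}$), and --- crucially --- the two incident regions lie on the \emph{same} side of the shared sheet, so crossing it flips both memberships at once and a pair of critical points (foot on $F$ and foot on $E$, resp.\ foot on $E$ and the vertex $V$) collides and annihilates, or is born. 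Genericity rules out any third region having the same sheet on its boundary. Your local coordinate models confirm this, including the signs.

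Regarding the ``delicate point'' you flag at the end: it is not actually an obstacle, and you could have closed it in three lines. At a vertex $V$ (simple or not), the link of $V$ in $\partial\mathbf{P}$ is the boundary circle of the convex spherical polygon $Spher(V)$, and the lower link of $SQD_y$ at $V$ is the intersection of that circle with the open hemisphere centered at the direction of $y-V$. By convexity of $Spher(V)$ this intersection is empty, a single arc, or the whole circle; hence $V$ is never a PL saddle --- it is a local maximum exactly when $y-V$ lies in the interior of the inner normal cone, and a regular point otherwise. This holds uniformly for non-simple vertices and shows that crossing a red sheet changes $V$ from maximum to regular while exactly one edge saddle dies, which is all that is needed. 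Likewise, no smooth-style ``nondegenerate fold'' verification is required in the PL setting: the before/after census of critical points is completely determined by which open active regions contain $y$, which is precisely what your incidence analysis computes.
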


We say that a point $y\in Int(\mathbf{P})$ \textit{projects} to a face $F$ (to an edge $e$, respectively) if the orthogonal projection of $y$ to the affine hull of $F$ (of $e$, respectively) lies in the interior of the face $F$ (or of the edge $e$).

An edge of a polytope with an acute dihedral angle is called an \textit{acute edge}.

\begin{lemma}\label{LemmaSad}\begin{enumerate}  
                               \item A point $y\in \mathbf{P}$ projects to a face iff the face contributes a minimum of $SQD_y$.
                               \item  Let  $e$ be an acute edge of  $\mathbf{P}$. Then  a  point $y\in \mathbf{P}$ projects to $e$ iff the function $SQD_y$ attains a saddle  at the edge $e$.
                             \end{enumerate}
\end{lemma}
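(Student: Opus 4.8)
The plan is to reduce both statements to one elementary fact together with a description of which directions yield genuine normals. The fact is that the restriction of $SQD_y$ to the affine hull of a face is a strictly convex quadratic whose unique minimum is the orthogonal projection $\pi_F(y)$ of $y$ onto $\Aff(F)$; the direction description is that a segment $yz$ with $z\in\partial\mathbf{P}$ is a normal exactly when the outward vector $z-y$ lies in the outer normal cone $N(z)$.

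For part (1) I would argue both directions directly. If $y$ projects to $F$, then $\pi_F(y)$ lies in the relative interior of $F$, and near this point $\partial\mathbf{P}$ coincides with $F$; hence the local behaviour of $SQD_y$ on $\partial\mathbf{P}$ is that of the strictly convex quadratic $SQD_y|_{\Aff(F)}$, which has a strict minimum at $\pi_F(y)$, so $F$ contributes a minimum. Conversely, if $F$ contributes a minimum, the classification of critical points by location (minima sit on faces) places it at a point $z$ in the relative interior of $F$; locally $\partial\mathbf{P}=\Aff(F)$, so $z$ minimizes the convex quadratic $SQD_y|_{\Aff(F)}$, forcing $z=\pi_F(y)$ and thus $y$ projects to $F$.

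For part (2) I would work in the $2$-plane $\Pi$ orthogonal to the edge $e$. Write $\ell=\Aff(e)$ and let $n_1,n_2$ be the outer unit normals of the two faces $F_1,F_2$ meeting along $e$; both are orthogonal to $\ell$, so they span $\Pi$, and for an interior point $z\in e$ the outer normal cone is $N(z)=\mathrm{cone}(n_1,n_2)$. A segment $yz$ with $z\in\operatorname{int}(e)$ is a normal precisely when $z-y\in N(z)\setminus\{0\}$. The easy direction is immediate: if some normal from $y$ has base $z\in\operatorname{int}(e)$, then $z-y\in\mathrm{cone}(n_1,n_2)$ is orthogonal to $\ell$, so $\pi_\ell(y)=z\in\operatorname{int}(e)$ and $y$ projects to $e$; this uses no hypothesis on the dihedral angle, and since a critical point based in the relative interior of an edge can be neither a maximum (those occur at vertices) nor a minimum (those occur at faces), it is a saddle.

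The substantive direction is the converse, and this is where acuteness enters and is the main obstacle. Assuming $y$ projects to $e$, set $z=\pi_\ell(y)$, so $y$ lies in the cross-sectional plane through $z$ and its position in $\Pi$ lies in the interior of the wedge that $\mathbf{P}$ cuts out, whose opening angle is the dihedral angle $\theta$. Placing $z$ at the origin of $\Pi$ with coordinates $(u,v)$, membership of the cross-sectional point in the wedge reads $|u|/|v|<\tan(\theta/2)$, while the cone condition $z-y\in N(z)$ reads $|u|/|v|\le\cot(\theta/2)$; for $\theta<\pi/2$ one has $\tan(\theta/2)<\cot(\theta/2)$, so the wedge condition already forces the cone condition and hence $z-y\in N(z)\setminus\{0\}$. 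Thus $z$ is a normal based in $\operatorname{int}(e)$, and I would confirm directly that it is a saddle from the local model, in which $SQD_y$ restricted to $e$ has a minimum at $z$ while its one-sided derivative onto each of $F_1,F_2$ is strictly negative. I expect the careful bookkeeping of the two competing angles $\tan(\theta/2)$ and $\cot(\theta/2)$ to be the delicate point, since the obtuse case shows the equivalence genuinely fails without acuteness.
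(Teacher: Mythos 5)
Your argument is correct, and there is in fact nothing in the paper to compare it against: this lemma is one of the statements imported from \cite{NasPanSiersma} without proof, so your normal-cone computation serves as a self-contained substitute. The quantitative heart is right: with the cross-sectional wedge placed symmetrically about the $v$-axis, the two membership conditions $|u|/|v|<\tan(\theta/2)$ (the point $y$ lies in the wedge, i.e.\ $y$ projects to $e$) and $|u|/|v|\le\cot(\theta/2)$ (the vector $z-y$ lies in the normal cone spanned by $n_1,n_2$) are both computed correctly, and acuteness enters exactly twice, as it must: it gives $\tan(\theta/2)<\cot(\theta/2)$, so projecting to the edge forces the normal to exist, and it makes the one-sided derivatives of $SQD_y$ into $F_1$ and $F_2$ strictly negative, which is what makes the critical point a saddle rather than a minimum (the obtuse case, as you note, breaks both steps). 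One small caveat: in the converse of part (1) and in the ``easy direction'' of part (2) you lean on the classification ``maxima at vertices, minima on faces, saddles on edges''; in the paper that classification is itself stated without proof and only for $y\notin\mathcal{B}(\mathbf{P})$. To make your argument genuinely self-contained, replace that appeal by a direct check that a local minimum of $SQD_y$ can sit neither at a vertex $V$ (since $y-V$ lies in the pointed tangent cone at $V$, it has positive inner product with some edge direction, which is therefore a strictly decreasing direction along $\partial\mathbf{P}$) nor at an interior point $z$ of an edge (the two one-sided conditions $(z-y)\cdot u_1\ge 0$ and $(z-y)\cdot u_2\ge 0$, with $u_1,u_2$ the inward face directions, would force the dihedral angle to be at least $\pi$). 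This is a one-sentence patch, not a gap in the logic.
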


\begin{lemma}\label{LemmaSaddles} For a convex polytope $\mathbf{P}\subset \mathbb{R}^3$ and a  point $y\notin \mathcal{B}(\mathbf{P})$, the number of critical points of $SQD_y$ (equivalently, the number of normals emanating from $y$) equals $2+2s$,
   where $s$ is the number of saddles.
   
   In particular, the number of normals emanating from $y$ is even.

\end{lemma}

In the paper we restrict ourselves to \textit{generic  polytopes}:

\begin{definition}\label{DefGen} A polytope $\mathbf{P}$ with the vertex set $\mathbf{V}$ is generic if  affine hulls of any two (two-element or three-element) subsets of
$\mathbf{V}$ are neither parallel nor orthogonal, unless this is dictated by unavoidable reasons. For example,
  \begin{description}
  \item[i] No two edges  are orthogonal.
  \item[ii]  No edge is parallel to a face, unless they are incident. 
\end{description}
\end{definition}

Important observations are: (1) {For a generic polytope, the sheets of $\mathcal{B}(\mathbf{P})$ intersect transversally.}
(2) {Any polytope can be turned to a generic one by a close to $id$ projective transform.} (3) If $\mathbf{P}$ is generic, then any sheet of $\mathcal{B}(\mathbf{P})$ 
related to a face contains no vertices of $\mathbf{P}$ except for vertices of the face.

\medskip

\subsection{Associated spherical geometry}

A \textit{spherical polygon}  is a polygon with geodesic edges lying in the standard sphere $S^2$. It is always supposed to fit in an open hemisphere.

A vertex $V$ of a polytope $\mathbf{P}$ yields a convex spherical polygon $Spher(V)$  which is the intersection of a small sphere centered at $V$  with  the polytope $\mathbf{P}$.  Although the sphere is small, let us assume that it is equipped with the metric of the unit sphere  $S^2$.

For points $X,Y \in S^2$ denote by  $|XY|$ the spherical {distance} between them.

 Say that  a point  $Y$ \textit{projects to }a geodesic segment $s$  if (i) the segment and the point $Y$ fit in an open hemisphere, (ii) $s$ contains a point  $W$ such that the geodesic segment $YW$ is orthogonal to $s$, and (iii) $|YW|<\pi/2$.  In this case  $|YW|$ realizes the Hausdorff distance between the point  $Y$ and the segment $s$.

Let $P\subset S^2$ be a convex spherical polygon, let $Y\in Int~P$. Define the spherical squared distance function
$$SQD_Y^{sph}:  \partial P \rightarrow \mathbb{R}, \ \ SQD_Y^{sph}(Y)=|XY|^2.$$

Let $V$ be a vertex of a polytope $\mathbf{P}$. Let a point $y$ lie in the interior of $\mathbf{P}$, close to the vertex $V$. Here and in the sequel we denote by $Y$ the intersection of the small sphere centered at $V$ and the ray $Vy$.

\begin{definition}
 
 The core   of a convex spherical polygon $P$ is defined as follows:
 $$Core(P)=\{Y|  \ \ Y \hbox{ lies  in the interior of $P$, and } |YX|\leq \pi/2 \ \ \forall  X \in \partial P\}.$$
\end{definition}

\begin{lemma}\label{LemmaCriterionMaxMin Sadd}
  In the above notation,  \begin{enumerate}
                          \item $V$ is a  maximum of   $SQD_y$  iff
                         
                           $Y \in Core(Spher(V))$.
                           
                          \item Let $e$ be an edge of $\mathbf{P}$ incident to $V$. Assume  $A= Spher(V)\cap e$. The edge $e$ contributes  a saddle point of   $SQD_y$  iff \begin{enumerate}
                                                                                           \item $A$ is a  maximum of $SQD_Y^{sph}$, and
                                                                                           \item $|AY|<\pi/2$.
                                                                                         \end{enumerate}
                          \item A  face $F$  incident to $V$ contributes  a minimum of   $SQD_y$  iff  $Y$ projects to the edge  $F\cap S^2$ of $Spher(V)$.
                                                                                           
                        \end{enumerate}
\end{lemma}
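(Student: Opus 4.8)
The plan is to linearize everything in \emph{cone coordinates} at $V$ and thereby reduce the three‑dimensional count of critical points of $SQD_y$ near $V$ to a one‑dimensional question about $SQD_Y^{sph}$ on $\partial\, Spher(V)$. I place $V$ at the origin and take the small sphere to be the unit sphere, so that $y=\rho u$ with $\rho>0$ small and $Y=u$. Near $V$ the boundary $\partial\mathbf P$ is \emph{exactly} the cone over $\partial\, Spher(V)$ (the faces are flat and the edges straight), so parametrizing $\partial\, Spher(V)$ by arc length $\tau$ with unit direction $w(\tau)\in S^{2}$, every point of $\partial\mathbf P$ close to $V$ is $x=s\,w(\tau)$ with $s\ge 0$, and
\begin{equation*}
f(s,\tau):=SQD_y\bigl(s\,w(\tau)\bigr)=s^{2}-2s\rho\cos\phi(\tau)+\rho^{2},\qquad \phi(\tau):=|w(\tau)\,Y|,
\end{equation*}
so that $SQD_Y^{sph}(w(\tau))=\phi(\tau)^{2}$. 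The map $(s,\tau)\mapsto s\,w(\tau)$ is a homeomorphism onto a neighbourhood of $V$ in $\partial\mathbf P$ which is a diffeomorphism on each face‑stratum, so the critical points of $SQD_y$ and their Morse type (minimum, saddle, maximum) can be read off from $f$. Here $V$ sits at $s=0$, the open edges at the corner values $\tau=\tau_A$ of $\partial\, Spher(V)$ with $s>0$, and the open faces at the smooth values of $\tau$ with $s>0$.

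The key device is \emph{partial minimization in the radial variable}. For fixed $\tau$ the parabola $s\mapsto f(s,\tau)$ has its minimum at $s^{*}(\tau)=\rho\cos\phi(\tau)$, which is positive precisely when $\phi(\tau)<\pi/2$, with minimal value $\rho^{2}\sin^{2}\phi(\tau)$. Consequently a critical point with $s>0$ can occur over $\tau$ only where $|w(\tau)Y|<\pi/2$, and where this holds $f$ increases together with $\phi(\tau)$, hence with $SQD_Y^{sph}$. This is exactly what turns the planar extremum problem into the spherical one.

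I would then dispatch the three cases. For the vertex I write $f(s,\tau)-\rho^{2}=s\bigl(s-2\rho\cos\phi(\tau)\bigr)$, which is negative for all small $s>0$ and all $\tau$ iff $\cos\phi(\tau)>0$ on all of $\partial\, Spher(V)$; since $Y=u\in Int\, Spher(V)$ automatically, this is precisely $Y\in Core(Spher(V))$, giving (1). On a smooth arc (a face $F$) a critical point with $s>0$ requires $s=s^{*}(\tau)$ and $\phi'(\tau)=0$ with $\phi<\pi/2$, i.e.\ the foot of the perpendicular from $Y$ to the geodesic $F\cap S^{2}$ lies in the arc at distance $<\pi/2$, which is exactly ``$Y$ projects to $F\cap S^{2}$''; as $\partial_{ss}f=2>0$ and $\phi$ has a local minimum there, it is a minimum, giving (3). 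At a corner $\tau=\tau_A$ (an edge $e$, with $A=Spher(V)\cap e$) the radial part forces $s^{*}=\rho\cos|AY|$, which exists iff $|AY|<\pi/2$, yielding condition (b); fixing this $s^{*}$, the one‑sided derivatives of $f(s^{*},\cdot)$ have the same sign pattern as those of $\phi$, so $f$ has an angular local maximum at the corner iff $\phi$ (equivalently $SQD_Y^{sph}$) does, yielding condition (a). Radial minimum combined with angular corner‑maximum makes $s^{*}A$ a saddle; if (a) fails, $s^{*}A$ is either not critical or is a minimum, so $e$ contributes no saddle. This gives (2).

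The hard part will be the edge case, because $\partial\, Spher(V)$ has a genuine corner at $A$ and $f$ is only piecewise smooth there, so an ordinary Hessian is unavailable and I must argue directly with one‑sided derivatives that the configuration ``radial minimum $\times$ angular corner‑maximum'' is a true index‑$1$ point (a mountain pass) rather than something degenerate. Concretely I would expand $\cos\phi(\tau)-\cos|AY|\approx-\sin|AY|\,\phi'(0^{\pm})\tau$ to see that, under (a)--(b), $f-f(s^{*}A)$ behaves like $a(s-s^{*})^{2}-b|\tau|$ with $a,b>0$, whose local sublevel set has two components, the signature of a saddle. I also have to confirm that the cone‑coordinate homeomorphism, being a diffeomorphism on each open face, preserves the Morse type entering the normal count, and that the smallness of $\rho$ keeps $s^{*}A$ inside the open edge $e$.
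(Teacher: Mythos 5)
Your proposal is correct, but note that the paper itself contains no proof of this lemma: it is imported verbatim from \cite{NasPanSiersma} (``Most of the definitions and lemmata of the section are borrowed from \cite{NasPanSiersma}''), so there is no internal argument to compare against. Your cone-coordinate computation is a legitimate self-contained substitute, and the core identity it rests on is sound: writing $f(s,\tau)=s^{2}-2s\rho\cos\phi(\tau)+\rho^{2}$ and completing the square gives
\begin{equation*}
f(s,\tau)-f(s^{*},\tau_A)=(s-s^{*})^{2}-2s\rho\bigl(\cos\phi(\tau)-\cos\phi(\tau_A)\bigr),
\end{equation*}
which simultaneously delivers all three cases (vertex, face, edge) exactly as you describe; I checked in particular that the one-sided angular derivatives $\partial_\tau^{\pm}f=2s\rho\sin\phi\,\phi'(\tau_A^{\pm})$ reproduce, for interior $y$, the correct PL classification of an edge point (both transverse derivatives negative $\Leftrightarrow$ saddle, mixed signs $\Leftrightarrow$ regular), so your ``radial minimum $\times$ angular corner-maximum'' criterion matches the normal-cone characterization of a saddle. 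Two small caveats you should make explicit. First, your criterion in case (1) is the strict inequality $\phi(\tau)<\pi/2$ on all of $\partial\,Spher(V)$, whereas $Core$ is defined in the paper with $\leq$; in the degenerate case $|YX|=\pi/2$ for some $X\in\partial\,Spher(V)$ the point $V$ is \emph{not} a local maximum (there $f(s,\tau_0)-\rho^{2}=s^{2}>0$), so the ``iff'' as literally stated holds only for $y\notin\mathcal{B}(\mathbf{P})$ --- which is the standing assumption throughout the paper, but worth saying. Second, your reduction is an ``iff'' about critical points \emph{near} $V$, so you need the observation (which you have implicitly, and which holds since orthogonal projection is $1$-Lipschitz and fixes $V$) that any critical point contributed by a face or edge incident to $V$ lies within distance $\rho=|Vy|$ of $V$, hence inside the conical neighbourhood where $\partial\mathbf{P}$ coincides with the cone over $\partial\,Spher(V)$; without this the equivalence would only be one-directional.
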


\begin{definition} \label{dfnNicePoly} 

\begin{enumerate}
  \item A point $Y\in Int(P)$ has a short maximum (respectively, short minimum) at $X\in \partial P$ if $X$ is a  maximum (respectively, a  minimum) of $SQD^{sph}_Y$, and $|YX|<\pi/2$.

  \item  A convex spherical polygon $P \subset S^2$ is nice if there is a point $Y$ in its interior such that $SQD^{sph}_Y$ has at least three  short maxima. Otherwise  $P$ is called skew.

  \item  A vertex $V$ of a polytope $\mathbf{P}\subset \mathbb{R}^3$ is skew if the associated  spherical polygon $Spher(V)$ is skew. Otherwise $V$ is called nice.
\end{enumerate}

\end{definition}

Short maxima are attained at (some) vertices, whereas short minima are attained at edges.  (This is no longer true for non-short maxima and minima.)

For a fixed $Y$, maxima and minima of $SQD_Y^{sph}$  alternate on $\partial P$. We say that a minimum and a maximum are \textit{neighbours}  if there is no other maxima and minima on a connecting arc of $\partial P$.

\medskip
\textbf{Example.}  Skew triangles were studied in \cite{NasPanSiersma} in detail. 
 A skew triangle is depicted in Fig \ref{fig:skew}.

\begin{figure}[h]
  \includegraphics[width=0.7\linewidth]{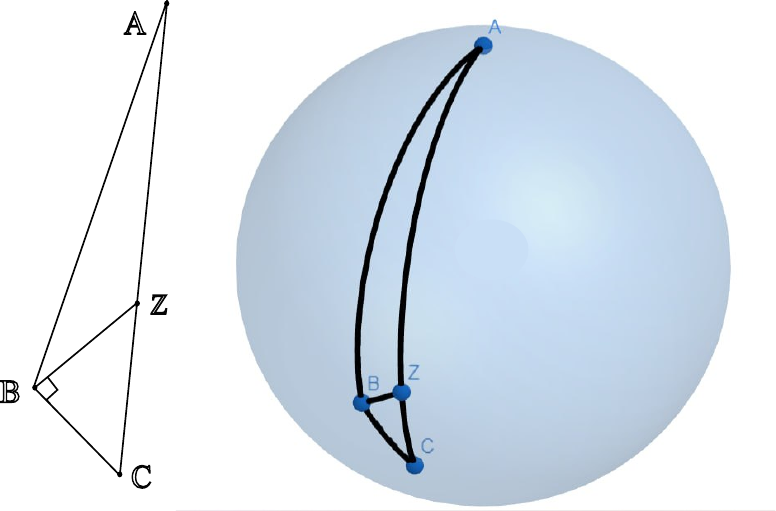}
  \caption { A skew triangle}
  \label{fig:skew}
\end{figure}

\section{Proof of Theorem 1}\label{SecProofThm1}
\begin{prop} \label{propNice}

If a generic polytope $\mathbf{P}\subset \mathbb{R}^3$ has no point in its interior with 10  emanating normals, then all the vertices of  $\mathbf{P}$ are skew.

\end{prop}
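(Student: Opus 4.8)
The plan is to prove the contrapositive: if $\mathbf{P}$ has a \emph{nice} vertex, then some interior point carries at least $10$ normals. By Lemma \ref{LemmaSaddles} this is the same as producing a point $y$ for which $SQD_y$ has at least $4$ saddles. So I fix a nice vertex $V$ and, using Definition \ref{dfnNicePoly}, choose a direction $Y\in Int(Spher(V))$ at which $SQD^{sph}_Y$ has at least three short maxima $A_1,A_2,A_3$, sitting on the edges $e_1,e_2,e_3$ of $\mathbf{P}$ incident to $V$. I then let $y$ travel along the ray $y(t)=V+tY$, $t>0$. For small $t$, Lemma \ref{LemmaCriterionMaxMin Sadd}(2) turns each short maximum $A_i$ into a saddle of $SQD_{y(t)}$ with base on $e_i$, while parts (1) and (3) describe the rest of the local picture at $V$ (for instance $V$ may itself be a maximum, with short minima on the incident faces). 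Thus near $V$ we already have three saddles, i.e.\ $8$ normals, and the whole game is to manufacture a fourth saddle.

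The fourth saddle will be \emph{born} far from $V$. As $t$ grows, $y(t)$ approaches the point $z_0$ where the ray meets $\partial\mathbf{P}$; generically $z_0$ lies in the relative interior of a face $G$ not incident to $V$. For $y$ close to $z_0$ the point $y$ projects to $G$, so by Lemma \ref{LemmaSad}(1) the face $G$ contributes a minimum of $SQD_y$ that is absent near $V$. Hence somewhere along the ray $y(t)$ enters the active region $\mathcal{AR}(G)$, transversally crossing a blue sheet of $\mathcal{B}(\mathbf{P})$; by Lemma \ref{PropBifurc}(1) this crossing gives \emph{birth} to a minimum–saddle pair. The saddle produced here is my candidate fourth saddle, and it is genuinely new, since its base lies on an edge of $G$, disjoint from $e_1,e_2,e_3$.

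It remains to arrange that, at the instant this far pair is born, the three saddles inherited from $V$ are still alive. Each $V$-saddle disappears only at a sheet crossing (Lemma \ref{PropBifurc}): its base leaves the open edge $e_i$ exactly when $y(t)$ leaves $\mathcal{AR}(e_i)$, a crossing that annihilates that saddle together with a maximum or a minimum according to Lemma \ref{PropBifurc}. So the argument reduces to a bookkeeping of the \emph{order} of bifurcations along the ray: I must select $Y$ among the admissible short-maximum directions, and identify the exit face $G$, so that the blue crossing into $\mathcal{AR}(G)$ occurs while $y(t)$ still lies in $\mathcal{AR}(e_1)\cap\mathcal{AR}(e_2)\cap\mathcal{AR}(e_3)$. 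Granting this, at that moment $SQD_{y(t)}$ has the three surviving $V$-saddles plus the newborn one — four saddles in all — and Lemma \ref{LemmaSaddles} delivers the desired $10$ normals.

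The main obstacle is precisely this control of the bifurcation order: guaranteeing that a far-face minimum is created before any of the three $V$-saddles is destroyed. For a \emph{simple} vertex (three edges and three faces) the polygon $Spher(V)$ is a triangle and the local count is rigid enough to locate the relevant active regions directly. For a general vertex, $Spher(V)$ has more vertices and edges, the local minimum/maximum counts become flexible, and the premature red-sheet crossings that would kill a $V$-saddle must be excluded by hand; this case analysis is the technical heart of the proof.
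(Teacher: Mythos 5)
Your setup coincides with the paper's up to the moment the point starts moving: a nice vertex $V$, a direction $Y$ with three short maxima, the moving point $y$ on the ray $VY$, and an initial count of eight normals. The genuine gap is exactly the step you label the ``main obstacle'' and then assume (``Granting this''): you need the birth of the far minimum--saddle pair (the blue-sheet crossing into $\mathcal{AR}(G)$) to occur \emph{before} any of the three $V$-saddles dies, and you give no argument that $Y$ and the exit face $G$ can be chosen to enforce this ordering. Nothing in the definition of niceness gives you that control, and the scenario you must exclude --- a saddle $s_i$ dying against a minimum before the ray enters the active region of any far face --- is a real possibility, which is precisely why the paper devotes a case to it. As written, you have reduced the proposition to an unproved (and possibly false, in general) ordering claim.

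The paper's proof shows that no ordering control is needed: it analyzes whatever the \emph{first} event along the ray is, and wins in every case. If the first event is a birth, there are $10$ normals at once. If there is no bifurcation before $y$ exits $\mathbf{P}$, a fourth minimum appears on the exit face just before exit, giving $10$ critical points. If the first event is the death of a saddle $s_i$, then: (i) $s_i$ cannot die against a maximum --- not against $V$, since $s_i$ slides away from $V$, and not against another vertex $V'$, since $s_i$ is the orthogonal projection of $y$ onto its edge, so $s_i$ reaching $V'$ would force $|Vy|>|VV'|$, contradicting $V'$ having been a maximum; and (ii) if $s_i$ dies against a minimum, that minimum is none of $m_1,m_2,m_3$, so a \emph{fourth} minimum already exists, and right before the bifurcation $SQD_y$ has at least $4$ minima and $2$ maxima, hence at least $4$ saddles and at least $10$ critical points by the Morse count (Lemma \ref{LemmaSaddles}). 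This last observation --- that the very death you are trying to postpone is itself a certificate of a fourth minimum and hence of $10$ normals --- is the idea missing from your proposal. (Secondarily, the paper splits into the cases of exactly three versus at least four short minima around the $A_i$, since only in the former is $Y\in Core(Spher(V))$ guaranteed, making $V$ a maximum along the ray; your sketch glosses over how the local picture at $V$ is pinned down.)
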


\begin{proof}

Assume the contrary: $V$ is a nice vertex of $\mathbf{P}$, and $\mathbf{P}$ has no point in its interior with 10  emanating normals. Let $Y$ be a point inside $P = Spher(V)$ with three short maxima $A_1, A_2,A_3$. Each of the maxima is surrounded on $\partial P$ by two short minima, so there are at least three short minima of $SQD^{sph}_Y$.  Consider two cases. 

\begin{enumerate}
    \item The number of the short minima (that are neighbors of $A_i$) is exactly $3$. Then all the minima are short, 
 $Y\in Core(Spher(V))$, so $Y\in \mathcal{AR}(V)$.

     Therefore,
for every point on the ray $VY$, the vertex $V$ is a  maximum. Take a point $y$ lying
on the ray very close to $V$ and let it travel along the ray. We may assume that $y$ never crosses two sheets of the bifurcation set at a time. If this is not the case, perturb $y$ a little bit. 

 Initially $SQD_y$ 
has  three minima $m_1$, $m_2$, and $m_3$, lying on the faces incident
to $V$ , three saddles $s_1$, $s_2, s_3$, lying on edges that correspond to $A_1, A_2, A_3$, and one maximum $V$.  Since $V$ is not
 the global maximum, there is at least one other maximum. Altogether there are at least 8 critical points of $SQD_y$, or, equivalently,
8 normals emanating from $y$.

The point $y$ keeps moving until the \textit{first event},  which is either a bifurcation of $SQD_y$,  or  the point $y$ leaves the
interior of $\mathbf{P}$. We have the following case analysis:

(a) The first event is a birth of two new critical points. This gives us 10 normals.

(b)
The point $y$ leaves the polytope $\mathbf{P}$. This means that $y$ hits one
of the faces which is not incident to the vertex $V$. Right before
$y$ leaves the polytope $\mathbf{P}$, there is the  fourth minimum on this face, and we have at least 10 critical points.

(c) One of the saddles $s_i$ dies. This happens in two cases:
(i) $s_i$ meets a  maximum of $SQD_y$. The maximum cannot be $V$ since  $s_i$ slides away from $V$.
It is not any other maximum $V'$. Indeed, recall
that $s_i$ is the orthogonal projection of $y$ to the edge. If $s_i$
meets $V'$ before $y$ leaves $\mathbf{P}$, then $|Vy|>|VV'|$. Then $V'$ can't be maximum of $SQD_y$ when $y=V$.
(ii) $s_i$ meets a minimum. It can be none of $m_1$, $m_2$, and $m_3$, so
right before the bifurcation, the function $SQD_y$ has $4$ minima and $2$ maxima, which proves the claim.

\item The number of short minima that are neighbors of $A_i$ is at least $4$. 

 We repeat the above construction. Initially we have at least four minima of $SQD_y$, three saddles, and  one  maximum.  The bifurcations are analyzed in the same way. If there is no bifurcation until $y$ leaves $\mathbf{P}$, we arrive at a fifth minimum just before leaving the polytope, which gives $10$ normals.

\end{enumerate}

\end{proof}

The following properties   of skew polygons are crucial for the proof:

\begin{prop}\label{propSkew}
\begin{enumerate}
    \item  Each skew polygon $P$ has exactly two acute angles, say, at vertices $M_1$ and $M_2$,
    \item $|M_1M_2|>\pi/2$,
    \item For any $p\in P$, the inequalities $|M_1p|<\pi/2$ and $|M_1p|<\pi/2$ imply that $p \in Core(P)$. 
\end{enumerate}
\end{prop}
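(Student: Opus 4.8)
The plan is to translate niceness into a statement about a family of spherically convex regions, one per vertex, and then to attack the three claims with spherical Helly and direct spherical trigonometry. First I would attach to every vertex $A$ of $P$, with incident edge-directions $u_1,u_2$ and interior angle $\alpha$, its short-max region
$$R_A=\{Y\in Int(P): A \text{ is a short maximum of } SQD_Y^{sph}\}.$$
Using the first variation of spherical arclength (equivalently the tangent-circle criterion: $A$ is a local maximum of $SQD_Y^{sph}$ along $\partial P$ iff both edges at $A$ lie on the $Y$-side of the geodesic through $A$ perpendicular to $YA$), I would prove the dichotomy: writing $D(A,\pi/2)$ for the open disk of radius $\pi/2$, (i) if $A$ is acute then every interior direction $A\to Y$ lies within $\pi/2$ of both edges, so $R_A=Int(P)\cap D(A,\pi/2)$; (ii) if $A$ is obtuse then $R_A=Int(P)\cap D(A,\pi/2)\cap W_A$, where $W_A$ is the wedge of central directions bounded by the two edge-perpendiculars at $A$, of angular width $\pi-\alpha$. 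Each $R_A$ is thus spherically convex, and $P$ is nice iff some three of the $R_A$ share a point. A useful companion observation is that at the incenter $I$ the direction $A\to I$ is the interior bisector, hence central, so \emph{every} vertex is a local maximum of $SQD_I^{sph}$; a vertex is then a short maximum for $Y=I$ exactly when $|IA|<\pi/2$. In particular a skew polygon has at most two vertices within $\pi/2$ of its incenter.

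For the bound \emph{at most two acute angles} I would argue by spherical Helly. Suppose $A_1,A_2,A_3$ are acute; by (i) it suffices to find $Y\in Int(P)$ with $|YA_i|<\pi/2$ for all $i$, since such $Y$ witnesses niceness. Apply Helly to the four spherically convex sets $Int(P),D(A_1,\pi/2),D(A_2,\pi/2),D(A_3,\pi/2)$, all lying in the open hemisphere $H$ that houses $P$: the gnomonic projection of $H$ onto the plane sends geodesics to lines, hence these to planar convex sets, so by the planar Helly theorem it is enough that every three of them meet. The triple of disks meets at the center $O$ of $H$, because each vertex lies in $H$, i.e. $|OA_i|<\pi/2$. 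Each triple $\{Int(P),D(A_i,\pi/2),D(A_j,\pi/2)\}$ meets because the midpoint of the geodesic segment $A_iA_j$ lies in $P$ at distance $|A_iA_j|/2<\pi/2$ from both endpoints (all pairwise distances being $<\pi$ in an open hemisphere), and a slight inward perturbation keeps it in $Int(P)$ and under $\pi/2$ by openness. Hence all four sets share a point, giving three short maxima; contrapositively a skew polygon has at most two acute angles. The Gauss–Bonnet identity $\sum(\pi-\alpha_i)=2\pi-\mathrm{Area}<2\pi$ also yields this bound and will be reused below.

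The remaining claims encode the picture that a skew polygon is \emph{long and thin}: its two acute vertices $M_1,M_2$ are the far-apart tips, with the obtuse vertices running along the two sides. For \emph{at least two acute angles} I would show that a polygon with at most one acute angle is nice, by proving that such a polygon is round enough that three or more vertices lie within $\pi/2$ of the incenter (where, as noted, all vertices are already local maxima); here I would relate the distance $|IA|$ to the angle via the right spherical triangle at $A$ (giving $\cos|IA|=\cos r\cos t_A$, with sharper angles forcing larger tangent length $t_A$) and feed this into the Gauss–Bonnet count. For part (2), assuming for contradiction $|M_1M_2|<\pi/2$, I would place $Y$ in the then-fat lens $D(M_1,\pi/2)\cap D(M_2,\pi/2)\cap Int(P)$ and track the local maxima of $SQD_Y^{sph}$ to manufacture a third short maximum, contradicting skewness. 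For part (3) I would study the convex region $L\cap P$ with $L=D(M_1,\pi/2)\cap D(M_2,\pi/2)$; part (2) already gives $M_1,M_2\notin L$, so the extreme points of $L\cap P$ are the lens corners $\pm(M_1\times M_2)$ (when in $P$), the crossings of $\partial P$ with $\partial D(M_i,\pi/2)$, and the obtuse vertices lying in $L$, and checking $|Vp|\le\pi/2$ (i.e. $\langle V,p\rangle\ge0$) at each extreme point $p$ via the orientation of $V$ relative to $M_1,M_2$ gives $p\in Core(P)$.

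I expect the main obstacle to be exactly the simultaneity and interiority bookkeeping beyond the clean Helly step: controlling the obtuse-vertex constraints $W_A$, quantifying "roundness" well enough to force three near vertices in the at-most-one-acute case, and guaranteeing that every witness point produced actually lands in $Int(P)$. This is where the convexity of $P$ and the open-hemisphere hypothesis must be used most carefully, and it is the part I would expect to cost the most work.
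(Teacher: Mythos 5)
Your opening dichotomy for the regions $R_A$ is correct, and your Helly argument (gnomonic projection, the four sets $Int(P)$, $D(A_i,\pi/2)$, pairwise witnesses at midpoints and at the hemisphere's center) is a genuinely different and clean proof of ``\emph{at most} two acute angles'' --- the paper instead extracts this from a minimal-bigon construction. But that is the only part of the proposal that stands. The pivot of everything else is your incenter claim, and it is false: for a spherical (or planar) convex polygon with four or more vertices, the center $I$ of a largest inscribed circle does \emph{not} lie on all angle bisectors (the inscribed circle touches only some of the edges), so the direction $A\to I$ need not bisect the angle at $A$, and an obtuse vertex $A$ need not be a local maximum of $SQD^{sph}_I$. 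Indeed, if \emph{every} vertex were a local maximum, then by the alternation of maxima and minima $I$ would project to the interior of \emph{every} edge, which already fails for a long thin quadrilateral with $I$ near one end. Consequently your proof of ``at least two acute angles'' --- the other half of part (1), without which $M_1,M_2$ are not even defined --- has no valid starting point, and the Gauss--Bonnet ``roundness'' estimate you would feed it into is left as a hope rather than an argument.

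Parts (2) and (3) are missing the structural input that makes them provable, and this is exactly what the paper's minimal bigon supplies: for a skew $P$ the minimal bigon is \emph{acute-angled}; its midpoint $X$ lies in $Core(P)$; the two maxima $M_1,M_2$ of $SQD^{sph}_X$ are the acute vertices and lie in the two halves of the bigon; each half has spherical diameter $\pi/2$; and the polygon angle at each $M_i$ is acute. Given these facts, part (3) is one application of the spherical law of cosines ($|M_ip|<\pi/2$, $|M_iA|\le\pi/2$, $\angle AM_ip<\pi/2$ force $\cos|Ap|>0$), and part (2) follows \emph{from} part (3): if $|M_1M_2|<\pi/2$, then every vertex $A$ is within $\pi/2$ of both $M_i$, so a point just inside the spherical active region of any third vertex $A$ sees three short maxima $M_1,M_2,A$. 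Your sketch of (2) never identifies where a third short maximum would come from --- the lens $D(M_1,\pi/2)\cap D(M_2,\pi/2)$ by itself only ever produces the two maxima $M_1,M_2$ --- and your extreme-point check in (3) requires unproven bounds such as ``every vertex of $P$ is within $\pi/2$ of the lens corners $\pm M_1\times M_2$'', which does not follow from (1)--(2) for a general convex polygon and is essentially the statement being proved; nowhere in your (2)--(3) does skewness actually enter except through (1)--(2), and that is not enough. Note finally that the logical order must be (3) before (2): as the paper's proof shows, (2) is a corollary of (3), whereas your proposal attempts (2) first by an unspecified bifurcation-tracking argument.
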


We postpone the proof of the proposition to the next section.

\subsection*{Proof of Theorem 1 modulo Proposition \ref{propSkew}}

Assume the contrary: a generic polytope $\mathbf{P}\subset \mathbb{R}^3$ has no point in its interior with 10  emanating normals.
By Proposition \ref{propNice}, all the vertices of $\mathbf{P}$ are skew. Then, by Proposition \ref{propSkew} (1), each vertex of $\mathbf{P}$ is incident to exactly two acute edges. Therefore, acute edges form a closed broken line on the boundary of $\mathbf{P}$. Denote the broken line by $\mathcal{A}$.

For an interior point $y \in \mathbf{P}$  set $$SQD_{y,\mathcal{A}}: \mathcal{A}\rightarrow \mathbb{R}, \ \ SQD_{y,\mathcal{A}} (x) = |x-y|^2.$$ 

\begin{lemma}\label{LemmaSadAcute}
\begin{enumerate}
  \item Minima of  $SQD_{y,\mathcal{A}}$ are attained at edges of $\mathcal{A}$. Each such a minimum is a saddle critical point of \newline $SQD_y: \partial \mathbf{P}\rightarrow \mathbb{R}.$
  \item  Maxima of  $SQD_{y,\mathcal{A}}$ are attained at vertices of $\mathcal{A}$. Maxima of  $SQD_{y,\mathcal{A}}$ are exactly maxima of $SQD_y$.
\end{enumerate}
\end{lemma}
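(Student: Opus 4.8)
The plan is to analyze $SQD_{y,\mathcal{A}}$ as a piecewise linear Morse function on the one-dimensional complex $\mathcal{A}$ (a disjoint union of cycles of acute edges). The single structural fact driving everything is that on each edge of $\mathcal{A}$ the function $x\mapsto|x-y|^2$ is the restriction of the Euclidean squared distance to a segment, hence a strictly convex parabola. Two consequences are immediate: (i) a local maximum can never lie in the interior of an edge, so all maxima of $SQD_{y,\mathcal{A}}$ are attained at vertices; and (ii) a point in the interior of an edge $e$ is a local minimum if and only if it is the foot of the perpendicular dropped from $y$ onto the line of $e$, i.e. if and only if $y$ projects to $e$. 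Since every edge of $\mathcal{A}$ is acute, Lemma \ref{LemmaSad}(2) identifies this with $e$ carrying a saddle of $SQD_y$, whose base is exactly that foot. This already gives the edge assertion of (1) and the vertex assertion of (2); what remains is to show that no vertex is a local minimum, and that the vertex maxima of $SQD_{y,\mathcal{A}}$ are precisely the maxima of $SQD_y$.

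I would translate the behaviour at a vertex $V$ into the spherical picture of $P:=Spher(V)$. Write $w_1,w_2$ for the unit directions of the two acute edges at $V$ (these exist and are unique by Proposition \ref{propSkew}(1)) and let $M_1,M_2\in\partial P$ be the corresponding acute vertices of $P$. Since $\langle y-V,w_i\rangle=|y-V|\cos|YM_i|$, the sign of $\cos|YM_i|$ controls the monotonicity of $SQD_{y,\mathcal{A}}$ leaving $V$ along $e_i$: the vertex $V$ is a local minimum iff $SQD_{y,\mathcal{A}}$ increases into both acute edges, i.e. $|YM_1|>\pi/2$ and $|YM_2|>\pi/2$, and $V$ is a local maximum iff $|YM_1|<\pi/2$ and $|YM_2|<\pi/2$ (genericity of $\mathbf{P}$ and of $y$ excludes the degenerate equalities $|YM_i|=\pi/2$ throughout).

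The crux, and the step I expect to be the main obstacle, is ruling out a vertex minimum, i.e. showing that $Y\in Int(P)$ cannot be at spherical distance exceeding $\pi/2$ from both $M_1$ and $M_2$. I would settle this via the spherical triangle $M_1M_2Y$. Because $P$ is convex and both $M_2$ and $Y$ lie in $P$, the geodesics $M_1M_2$ and $M_1Y$ leave $M_1$ inside the interior angular sector of $P$ at $M_1$, whose width is the acute interior angle $\alpha_1<\pi/2$; hence the angle of the triangle at $M_1$ is at most $\alpha_1<\pi/2$. Combined with $|M_1M_2|>\pi/2$ from Proposition \ref{propSkew}(2), the spherical law of cosines
\[
\cos|YM_2|=\cos|M_1M_2|\,\cos|YM_1|+\sin|M_1M_2|\,\sin|YM_1|\,\cos\angle(YM_1M_2)
\]
gives, whenever $|YM_1|>\pi/2$, a sum of two positive terms (the first because $\cos|M_1M_2|<0$ and $\cos|YM_1|<0$, the second because the angle at $M_1$ is acute), so $\cos|YM_2|>0$ and $|YM_2|<\pi/2$. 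Thus the two distances cannot both exceed $\pi/2$, the vertex $V$ is never a local minimum, and (1) follows.

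For (2) it remains to match the vertex maxima. If $V$ is a local maximum of $SQD_{y,\mathcal{A}}$ then $|YM_1|<\pi/2$ and $|YM_2|<\pi/2$, so $Y\in Core(P)$ by Proposition \ref{propSkew}(3), whence $V$ is a maximum of $SQD_y$ by Lemma \ref{LemmaCriterionMaxMin Sadd}(1). Conversely, a maximum of $SQD_y$ forces $Y\in Core(P)$, so $|YM_i|\le\pi/2$, strictly by genericity, and $V$ is a local maximum of $SQD_{y,\mathcal{A}}$. Since every maximum of $SQD_y$ is attained at a vertex of $\mathbf{P}$ and every vertex of $\mathbf{P}$ lies on $\mathcal{A}$ (each carries exactly two acute edges), the two collections of maxima coincide, completing (2).
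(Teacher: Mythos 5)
Your proof is correct, and its load-bearing steps are the same as the paper's: for part (1) you identify interior-of-edge minima with projection feet and invoke Lemma \ref{LemmaSad}(2) for acute edges, and for the nontrivial direction of part (2) (a vertex maximum of $SQD_{y,\mathcal{A}}$ is a maximum of $SQD_y$) you pass from $|YM_1|,|YM_2|<\pi/2$ to $Y\in Core(Spher(V))$ via Proposition \ref{propSkew}(3) and conclude by Lemma \ref{LemmaCriterionMaxMin Sadd}(1), which is precisely the paper's argument. Where you genuinely add something is the claim that no vertex of $\mathcal{A}$ can be a local minimum of $SQD_{y,\mathcal{A}}$: the paper's two-line proof is silent on this half of statement (1), whereas you derive it from Proposition \ref{propSkew}(1),(2) by the spherical law of cosines in the triangle $YM_1M_2$ (the angle at $M_1$ is bounded by the acute interior angle of $Spher(V)$ by convexity, $|M_1M_2|>\pi/2$, so $|YM_1|>\pi/2$ forces $\cos|YM_2|>0$). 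This argument is sound and fills a real, though harmless, omission --- in the paper's application (Case 2 of the main proof) only the implications ``saddle on an acute edge yields an edge minimum of $SQD_{y,\mathcal{A}}$'' and ``maximum of $SQD_{y,\mathcal{A}}$ yields a maximum of $SQD_y$'' are actually used, so the theorem is unaffected, but as a proof of the lemma as literally stated yours is the more complete one. Two minor remarks: the easy direction of (2) needs no detour through $Core$, since a local maximum of $SQD_y$ at a vertex of $\mathcal{A}\subset\partial\mathbf{P}$ is automatically a local maximum of the restriction (this is the paper's ``one way is clear''); and the exclusion of the equalities $|YM_i|=\pi/2$ is better attributed to the standing assumption $y\notin\mathcal{B}(\mathbf{P})$ (equality would place $y$ on a red sheet through $V$) than to genericity of $\mathbf{P}$ alone.
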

\begin{proof}
  (1) An edge contains a minimum of  $SQD_{y,\mathcal{A}}$ iff $y$ projects to it. Since the edge is acute, by Lemma \ref{LemmaSad} we have a saddle of $SQD_y$.
  
 (2) One way is clear:  a maximum of $SQD_y$ is  also a maximum of $SQD_{y,\mathcal{A}}$.
   The converse follows from Proposition \ref{propSkew}, (3) and Lemma \ref{LemmaCriterionMaxMin Sadd}.
\end{proof}

\begin{definition}
  A binormal of $\mathbf{P}$ is a segment $VW$ with endpoints on $\partial \mathbf{P}$ such that there exist two parallel support planes  orthogonal to $VW$, 
  one passing through $V$, and the other one through $W$.
\end{definition}

\begin{lemma}\label{binormal}
There exists a binormal $VW$ of $\mathbf{P}$ such that $V$ is a vertex, and $W$ lies in the interior of some edge.
\end{lemma}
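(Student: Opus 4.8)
The plan is to realize the binormal as a critical point of the \emph{width function} of $\mathbf{P}$, and to exploit the fact that the ``saddle'' critical point forced by the topology of $\mathbb{RP}^2$ is automatically of vertex--edge type. First I would introduce the support function $h(u)=\max_{x\in\mathbf{P}}\langle x,u\rangle$ and the width function $w(u)=h(u)+h(-u)$ on the unit sphere $S^2$. Since $\mathbf{P}$ is a polytope, $h$ is piecewise linear: on the relative interior of the normal cone $N_V$ of a vertex $V$ one has $h(u)=\langle V,u\rangle$, while on the arc dual to an edge $e$ the maximiser is the entire edge $e$. Hence $w$ is even and piecewise smooth with respect to the common refinement $\Sigma$ on $S^2$ of the normal fan of $\mathbf{P}$ and its antipodal image; on the relative interior of a $2$-cell $N_V\cap(-N_{V'})$ of $\Sigma$ one has $w(u)=\langle V-V',u\rangle$. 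The governing dictionary is that $u$ is a critical point of $w$ (vanishing tangential gradient, or $0$ in the tangential subdifferential) exactly when the support points $F(u)$ and $F(-u)$ satisfy $F(-u)-F(u)\parallel u$, i.e.\ the chord joining them is a binormal in direction $u$. I then descend $w$ to $\mathbb{RP}^2=S^2/\{\pm1\}$.

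Next I would produce a saddle. Since $w$ is even, it defines a function on $\mathbb{RP}^2$, and by Lyusternik--Schnirelmann theory (the category of $\mathbb{RP}^2$ is $3$) it has at least three critical points; in particular a min--max over a family of loops generating $H_1(\mathbb{RP}^2;\mathbb{Z}/2)$ produces a critical point $u_0$ with value strictly between $\min w$ and $\max w$ that is neither a local maximum nor a local minimum. The decisive observation is that $u_0$ cannot lie in the interior of a $2$-cell of $\Sigma$: there $w$ coincides with a height function $u\mapsto\langle a,u\rangle$ with $a=V-V'\neq0$, whose only critical points on $S^2$ are $\pm a/|a|$, and these are a strict local maximum and a strict local minimum. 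Thus every interior critical point of a $2$-cell is a strict local extremum of $w$, which $u_0$ is not. Therefore $u_0$ lies on the $1$-skeleton of $\Sigma$.

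Finally I would identify the contact type on a $1$-cell. A point in the relative interior of a $1$-cell of $\Sigma$ has, on one side, $u_0\in\operatorname{int}N_V$, so $F(u_0)$ is a single vertex $V$; on the other side $-u_0$ lies on the arc dual to a single edge $e=V_1'V_2'$, so $F(-u_0)=e$. The tangential subdifferential of $w$ at $u_0$ is then the segment spanned by the tangential parts of $V-V_1'$ and $V-V_2'$, and criticality means $V-W\parallel u_0$ for $W=\lambda V_1'+(1-\lambda)V_2'\in e$ with some $\lambda\in[0,1]$. For generic $\mathbf{P}$ the direction $u_0$ is parallel to neither $V-V_1'$ nor $V-V_2'$ and does not sit on the $0$-skeleton of $\Sigma$, so $\lambda\in(0,1)$ and the foot $W$ lies in the relative interior of $e$. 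This yields a binormal $VW$ with $V$ a vertex and $W$ interior to an edge, as required.

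The main obstacle I anticipate is technical rather than conceptual: making the Morse/min--max machinery rigorous for the merely piecewise-smooth even function $w$ on $\mathbb{RP}^2$ (via stratified Morse theory, or a small smoothing preserving the cell structure and the ``no interior saddle'' property), and discharging the genericity exclusions that keep the min--max direction off the $0$-skeleton and its foot off the two vertices of $e$. These are exactly the coincidences removed by the general position of Definition~\ref{DefGen} together with the fact that any polytope becomes generic under a small projective perturbation.
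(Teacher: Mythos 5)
Your overall strategy is the same as the paper's: binormals are exactly the critical points of the width function $w$, evenness of $w$ (descent to the projective plane) forces a critical point that is not a local extremum, and one then argues that such a critical point must be of vertex--edge type. Your analysis of the $2$-cells of $\Sigma$ is in fact a clean proof of what the paper merely asserts (that vertex--vertex binormals are local extrema of $w$), and your use of genericity to keep the foot $W$ away from the endpoints of $e$ is correct: if $W=V_1'$, then since $u_0\perp e$ one gets $(V-V_1')\perp e$, an orthogonality excluded by Definition~\ref{DefGen}. The min--max step has the same (acknowledged) level of nonsmooth-analysis hand-waving as the paper's appeal to the Clarke subdifferential, so I do not count it against you.

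There is, however, a genuine gap where you discard the $0$-skeleton of $\Sigma$. You claim that for generic $\mathbf{P}$ the direction $u_0$ ``does not sit on the $0$-skeleton'' and list this among the coincidences removed by general position. That is false: the $0$-cells of $\Sigma$ are precisely the directions of vertex--face and edge--edge contacts, and critical points of $w$ at such cells are not a removable coincidence --- they occur robustly for every polytope (the global minimum of the width is always attained at a vertex--face or edge--edge binormal, and it persists under any small perturbation). No genericity assumption can eliminate them. What actually rules them out as candidates for $u_0$ is the fact, asserted as cases (2) and (3) of the paper's own case analysis, that critical points of these two types are local \emph{minima} of $w$, whereas $u_0$ is not a local extremum. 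This fact requires proof, though it is provable inside your own framework: near a $0$-cell where $F(u_0)$ is a face $F$ and $F(-u_0)$ is a vertex $V'$, one has $w(u)=\max_{x\in \mathrm{vert}(F)}\langle x-V',u\rangle$; all linear pieces agree at $u_0$, and their tangential gradients $x-z$ (with $z\in\operatorname{int}F$ the foot of the binormal) positively span the tangent plane, so $u_0$ is a strict local minimum; the edge--edge case with non-parallel edges (non-parallelism is where genericity genuinely enters) is analogous. Until this step is supplied, your argument leaves open the possibility that the topologically forced non-extremal critical point is a vertex--face or edge--edge direction, which is exactly what you must exclude to conclude.
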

\begin{proof}

    Binormals of $\mathbf{P}$ correspond to critical points of the width function of $\mathbf{P}$. \footnote{By definition, \textit{width function} of $\mathbf{P}$ equals the distance between two parallel support planes to $\mathbf{P}$. The width function is clearly not a smooth function.
    So our aguments rely on the concept of \textit{Clarke subdifferential} borrowed from non-smooth analysis, see e.g.
 \cite{Agr}, which contains also the definition of critical point and the regular
interval theorem. 
 }

    We have the following case analysis:
    
  (1) If the  endpoints of a binormal are two vertices of $\mathbf{P}$, then the binormal realises a  maximum of the width function.

  (2) If the  endpoints of a binormal are a vertex of $\mathbf{P}$ and a point in the interior of a face, then the binormal realises a minimum.

  (3) If the  endpoints of a binormal lie in the interiors of some (non-parallel, because of genericity) edges of $\mathbf{P}$, then the binormal also realises a  minimum.

  Since the width function is an even function defined on $S^2$, or equivalently, a function defined on the projective plane, it cannot have maxima and minima only. Therefore  there should be other types of binormals. The only remaining generic type is the one we need.
\end{proof}

Now we are ready to prove the theorem. 

Consider a binormal from Lemma \ref{binormal}.  Let its endpoints be $V$ (a vertex of $\mathbf{P}$) and $W$  (a point in the interior of an edge).
Remind that by assumption and Proposition \ref{propNice},  $V$ is a skew vertex (as well as all the other vertices of $\mathbf{P}$.

\bigskip

\textbf{Case 1. The  binormal  $VW$ lies in the interior of $\mathbf{P}$, } Fig. \ref{fig:Binormal}.

Observe that $VW$ belongs to the active region $\mathcal{AR}(V)$, that is, for every point on the binormal, $V$ is a  maximum.

We may assume that the binormal never crosses two different sheets of $\mathcal{B}(\mathbf{P})$ at a time.  If this is not the case, replace the binormal by a generic broken line emanating from $V$ and ending at $W$  lying close to the binormal.  

Take a point $y$ lying on the binormal very close to the vertex $V$. By Lemmata \ref{LemmaCriterionMaxMin Sadd} and \ref{LemmaSad}, 
$SQD_y$    has two saddles $s_1, s_2$ on the acute edges incident to $V$, a saddle $s_3=W$,  two minima $m_1, m_2$ on the faces incident to $V$, and a maximum at $V$. Since $V$ is not a global maximum, there is some other
 global maximum; altogether  there are at least $8$  normals emanating from $y$.

\begin{figure}[h]
 \includegraphics[width=0.4\linewidth]{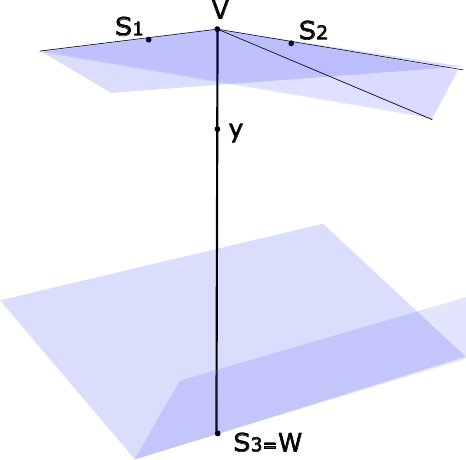}
  \caption { Binormal $VW$}
  \label{fig:Binormal}
\end{figure}

\begin{lemma}\label{LemmaVisNotMax}
 Let $VV'$ and $VV''$  be  two acute edges incident to $V$. For a point $y$ lying close to $V$
 none  of  $V'$ and $V''$ is a  maximum of $SQD_y$.
\end{lemma}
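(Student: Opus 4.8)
The plan is to translate the statement into the spherical geometry at the vertex $V'$ (the argument for $V''$ being identical) and to apply the criterion of Lemma \ref{LemmaCriterionMaxMin Sadd}(1). Recall that $V'$ is a maximum of $SQD_y$ if and only if the point $Y'$, the intersection of the ray $V'y$ with a small sphere centered at $V'$, lies in $Core(Spher(V'))$. Since $y$ is in the interior of $\mathbf{P}$, the point $Y'$ lies in the interior of $Spher(V')$; I will show that it falls outside the core, so that $V'$ fails to be a maximum.

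First I would identify the vertex $A' = Spher(V')\cap (V'V)$ of the spherical polygon $Spher(V')$ that corresponds to the edge $V'V$. The interior angle of $Spher(V')$ at $A'$ equals the dihedral angle of $\mathbf{P}$ along $V'V$, which is acute because $VV'$ is an acute edge. By assumption $V'$ is skew, so by Proposition \ref{propSkew}(1) the polygon $Spher(V')$ has exactly two acute vertices; hence $A'$ is one of them, and I denote the other one by $M$.

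Next I would exploit the key estimate of Proposition \ref{propSkew}(2): the spherical distance between the two acute vertices exceeds $\pi/2$, i.e. $|A'M|>\pi/2$. Because $y$ lies close to $V$, the direction of the ray $V'y$ is close to the direction of $V'V$, so $Y'$ is close to $A'$; more precisely, as $y\to V$ one has $Y'\to A'$ and, by continuity of the spherical distance, $|Y'M|\to |A'M|>\pi/2$. Thus for $y$ sufficiently close to $V$ we get $|Y'M|>\pi/2$. Since $M$ is a vertex of $Spher(V')$, hence a point of $\partial Spher(V')$, the defining condition of the core is violated, so $Y'\notin Core(Spher(V'))$. By Lemma \ref{LemmaCriterionMaxMin Sadd}(1), $V'$ is not a maximum of $SQD_y$, and the same argument applied to $Spher(V'')$ shows $V''$ is not a maximum either.

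The main obstacle lies less in the estimate itself than in setting up the spherical dictionary correctly: one must verify that the angle of the link $Spher(V')$ at $A'$ is exactly the dihedral angle along $V'V$, so that acuteness of the edge transfers to acuteness of the spherical vertex, and that $Y'$ genuinely lies in the open polygon and tends to the boundary vertex $A'$, rather than to an interior or edge point, as $y\to V$. Once these identifications are in place, Proposition \ref{propSkew}(2) does all the work through a one-line continuity argument.
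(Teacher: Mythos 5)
Your proof is correct and is essentially the paper's own argument: the paper likewise applies Proposition \ref{propSkew} (1)--(2) to the skew link of $V'$ to produce a second acute edge at $V'$ making an obtuse angle with $VV'$ (which is exactly your statement $|A'M|>\pi/2$ read in Euclidean terms), and then concludes that $V'$ cannot be a maximum for $y$ near $V$. Your write-up merely makes explicit the two steps the paper leaves implicit, namely the core criterion of Lemma \ref{LemmaCriterionMaxMin Sadd}(1) and the continuity argument $Y'\to A'$ as $y\to V$.
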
\label{LemmaNotMax}
\begin{proof}
  By  Proposition \ref{propSkew}, (2), there is an edge incident to $V'$ which makes an obtuse angle with $VV'$. Therefore $V'$ is not a maximum neither for $V$  
  nor for any point $y$ close to $V$.
\end{proof}

Now let the point $y$ travel along the binormal starting at $V$.
 As we have seen, initially there  are at least emanating $8$ normals.

Keep moving until the  \textit{first  event},  which is either a bifurcation (that is, a crossing of a sheet of $\mathcal{B}(\mathbf{P})$),  or   the point $y$ reaches $W$. We have the following case analysis. \begin{enumerate}

\item The first event is a birth of a new pair of critical points. Then right after bifurcation we have at least 10 normals.

\item The first event is a death of two critical points.
\begin{enumerate}
  \item One of the saddles  $s_i$ meets a maximum, and the two critical points die. This saddle cannot be $s_3$ since $s_3$ does not move. It can be neither $s_1$ nor $s_2$ by Lemma \ref{LemmaVisNotMax}.
\item One of the saddles $s_{1,2}$ meets a minimum. The minimum point is necessarily attained at a face incident to $V$, which is impossible since the distances between the bases of the normals grow as $y$ slides away from $V$.
\item The saddle $s_{3}$ meets a minimum. This happens only when $y$ reaches $W$. This case is analyzed below.
 
\end{enumerate}

\item Point $y$ reaches  $s_3 = W$ without bifurcation. Once $y$ almost reaches $W$, we observe two new minima lying on faces incident to the edge containing  $W$. By condition, no face of $\mathbf{P}$ contains both $V$ and $W$, therefore none of the two  new minima coincides with $m_1$ or $m_2$. Since there were no bifurcations on the way, all the ''old'' critical points persist, and eventually we have $4$ minima, $2$ maxima, and  $3$ saddles. Since the number of critical points is even,    altogether there are  at least 10 normals.
                                           \end{enumerate}

\medskip

\textbf{Case 2.  $VW$ lies in the interior of a face of $\mathbf{P}$.}

In this case the edge containing $W$ is acute.

One repeats the arguments from Case 1 for a point $y$ travelling along the binormal shifted a little bit inwards the polytope.
The same bifurcation arguments survive: if $y$ crosses a sheet of $\mathcal{B}(\mathbf{P})$ on its way, we get $10$ emanating normals.

  So the only remaining case is the one when there are no bifurcations on the (shifted) binormal.
 Here we need additional arguments since one of the two new minima coincides
with either $m_1$ or $m_2$. However we conclude that for a point $y$ on the (shifted) binormal we have (at least) $3$ minima, $2$ maxima, and $3$ saddles.
Moreover, these  $3$ saddles are attained on acute edges of $\mathbf{P}$. That is, we have three minima of $SQD_{y,\mathcal{A}}$, see Lemma \ref{LemmaSadAcute}.  Since $\mathcal{A}$  is a topological circle (or a disjoint union of topological circles),  there are necessarily three maxima of $SQD_{y,\mathcal{A}}$.  By Lemma \ref{LemmaSadAcute}, they are maxima of $SQD_y$ as well.

Now for $y$, we have (at least) $3$ saddles, $3$ maxima  and $3$ minima which gives at least $10$ critical points since the number of critical points is even.

\section{Proof of Proposition \ref{propSkew}}\label{SecProofProp2}

Since the proof is quite technical, we recommend the reader to start with proving the proposition for skew triangles only,   using  the following:

\begin{lemma}\cite{NasPanSiersma}\label{lemSkewTria}

A spherical triangle $ABC$ is skew iff (up to relabeling of the vertices), one has  $|CA|> \pi/2$, 
                                                                                                 $|BC|< \pi/2$,
                                                                                        $\angle ABC> \pi/2$,      
                                                                                                  $\angle BAC <\pi/2$,
                                                                                                 $ \angle BCA < \pi/2$,
                                                                                                  $|BA|> \pi/2$,
$|AZ| > \pi/2$ for the point $Z\in AC$ such that $ZB$ is orthogonal to $BC$.
\end{lemma}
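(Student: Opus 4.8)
The plan is to reduce niceness of the triangle to an explicit feasibility problem on the sphere and then to read the seven inequalities off it. Everything rests on one elementary observation: for a fixed interior point $Y$, the restriction of $SQD^{sph}_Y$ to a geodesic edge has a unique interior minimum exactly when the foot of the perpendicular from $Y$ falls inside the edge, and is monotone otherwise. Hence a vertex $X$ is a maximum of $SQD^{sph}_Y$ on $\partial P$ iff $Y$ projects into the interiors of \emph{both} edges incident to $X$. Since a triangle has only three vertices and (as recorded after Definition \ref{dfnNicePoly}) maxima sit at vertices, having three short maxima means that all of $A,B,C$ are short maxima. Combining the two remarks, $ABC$ is nice iff there is a point $Y\in Int(ABC)$ which (i) projects to the interior of each of the three edges and (ii) satisfies $|YA|,|YB|,|YC|<\pi/2$. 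I call such a $Y$ a \emph{witness}, and the whole proof is the analysis of when a witness exists.

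I would first dispose of the configurations that are automatically nice, forcing the coarse shape of conditions (1)--(6). By the dual law of cosines $\cos a=(\cos A+\cos B\cos C)/(\sin B\sin C)$, an all-acute triangle has all sides $<\pi/2$, hence is small enough that its circumcenter (whose feet on the edges are interior, and whose circumradius is $<\pi/2$) is a witness; so a skew triangle has an obtuse angle. Exhibiting suitable witnesses rules out two or three obtuse angles as well, leaving exactly one obtuse angle, say at $B$, and two acute angles at $A,C$ --- these are conditions (3)--(5). Still assuming $B$ obtuse, I would check that whenever one of $|BC|\ge\pi/2$, $|BA|\le\pi/2$, $|CA|\le\pi/2$ holds, a witness can be written down directly; thus skewness also forces conditions (1),(2),(6), and it remains to locate the sharp threshold inside this regime.

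The sharp threshold is condition (7), and one half of the equivalence is clean. Fix the regime (1)--(6) and let $g$ be the geodesic through $B$ orthogonal to $BC$; it meets $AC$ at the point $Z$ of the statement, cutting $ABC$ into $ABZ$ (containing $A$) and $CBZ$ (containing $C$). Because $\angle B>\pi/2$, a point of $ABC$ projects to the edge $BC$ only if it lies on the $C$-side of $g$, i.e. inside $CBZ$; hence every witness lies in $CBZ$. The three vertices of $CBZ$ are at distances $|CA|>\pi/2$, $|BA|>\pi/2$ and $|ZA|=|AZ|$ from $A$. If $|AZ|>\pi/2$ (condition (7)), then all three vertices of $CBZ$ lie in the closed hemisphere $\{X:|XA|\ge\pi/2\}$, which is geodesically convex; therefore all of $CBZ$ lies in it, so every candidate would satisfy $|YA|\ge\pi/2$, violating (ii) at $A$. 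No witness exists, so the triangle is skew. This convexity argument gives the implication: conditions (1)--(7) imply skew.

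The converse is where I expect the difficulty. I must show that in the regime (1)--(6) the strict inequality $|AZ|<\pi/2$ already produces a witness. The natural candidates are points $Y\in CBZ$ close to the edge $AC$ near $Z$: such $Y$ project to $AC$, and being on the $C$-side of $g$ they project to $BC$ at the $B$-end, while $|YA|\approx|AZ|<\pi/2$. The work is to secure \emph{simultaneously} the remaining constraints --- projection to $AB$, projection to $BC$ at the $C$-end, and the bounds $|YB|,|YC|<\pi/2$ --- by tuning $Y$ inside $CBZ$ and using conditions (1)--(6). The main obstacle is to prove that this feasible region is nonempty \emph{precisely} when $|AZ|<\pi/2$ (not merely for a margin below $\pi/2$); that is, to show the tension between ``$Y$ projects to $BC$'' and ``$Y$ stays within $\pi/2$ of $A$'' is the only binding constraint, the other five being slack at the extremal witness. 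I expect this to reduce, after placing $B$ at a pole with $BC$ along a meridian, to a one-parameter monotonicity check along $AC$ that pins the transition exactly at $|AZ|=\pi/2$ and completes the equivalence.
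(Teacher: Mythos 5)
This lemma is imported by the paper from \cite{NasPanSiersma} without proof, so there is no internal argument to compare against; your proposal must stand on its own, and at present it proves only half of the statement. The good half first: your reduction of niceness to the existence of a ``witness'' is sound for triangles --- three short maxima must occupy all three vertices, and if every vertex is a short maximum then along each edge the function decreases from both endpoints, forcing an interior minimum, i.e. $Y$ projects into the interior of every edge. (Your side remark that the restriction to an edge is ``monotone otherwise'' is not literally true: the antipodal foot can produce an interior \emph{non-short} maximum on an edge, exactly the phenomenon the paper flags after Definition \ref{dfnNicePoly}; but this does not damage the equivalence, since shortness at both endpoints rules it out.) The implication (1)--(7) $\Rightarrow$ skew is then genuinely established: projection to $BC$ with $\angle ABC>\pi/2$ confines any witness to the triangle $CBZ$, the distance from $A$ is monotone along the geodesic $AC$ so all of the segment $ZC$ lies at distance $\ge\pi/2$ from $A$, and the geodesic convexity of the closed hemisphere $\{X:|XA|\ge\pi/2\}$ swallows $CBZ$, killing the short maximum at $A$. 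That argument is correct and clean.

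The genuine gap is the converse, which is where the content of the lemma lies. You assert, without constructions, that witnesses exist when the triangle has zero, two, or three obtuse angles, and when any of $|BC|\ge\pi/2$, $|BA|\le\pi/2$, $|CA|\le\pi/2$ holds in the one-obtuse-angle regime; each of these is a nontrivial spherical-geometry verification, not a formality (your circumcenter argument in the all-acute case, for example, still needs the circumcenter to lie inside and the circumradius to be $<\pi/2$, which you state but do not prove). Most importantly, the decisive threshold step --- that $|AZ|<\pi/2$ together with (1)--(6) \emph{produces} a witness --- is explicitly deferred (``I expect this to reduce to a one-parameter monotonicity check''). This is the hard direction: a candidate $Y$ near $Z$ on the $C$-side of $g$ does get $|YA|\approx|AZ|<\pi/2$ and projection to $AC$, and shortness at $B$ and $C$ near $Z$ follows from (2) and (5) via the right triangle $ZBC$, but you must also secure projection to $AB$ and keep the foot on $BC$ away from the $C$-end, and you must show feasibility persists as $|AZ|\to\pi/2^-$, when the admissible region is squeezed against $Z$; nothing in the proposal rules out the other constraints becoming binding before the threshold, which would make the true boundary of skewness a different condition. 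Until that one-parameter analysis is actually carried out, the ``only if'' direction of the lemma is unproven.
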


\medskip

Throughout the section $P$ is a generic skew spherical polygon.

\begin{definition}\label{defMinBigon}
     \textit{A bigon} (or a lune) is a spherical polygon with two edges. It is bounded by two halves of big circles. 
     The minimal bigon  $Big(P)$ of a spherical polygon $P$ is the bigon containing $P$ which is minimal by inclusion, or, equivalently, with the minimal angle between its edges.
\end{definition}


\begin{lemma} \label{MinBigonLemma}
    $P$ intersects each edge of its minimal bigon $Big(P)$  by a segment whose interior contains  the midpoint of the edge, see Fig \ref{Fig1}.
\end{lemma}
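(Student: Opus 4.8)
The plan is to study the \emph{bigon angle as a function of the axis} and to locate $Big(P)$ among its critical configurations. First I would place the two vertices of $Big(P)$ at antipodal poles $\pm N$, let $\ell=N^{\perp}$ be the equatorial great circle, and let $M_i=e_i\cap\ell$ be the midpoint of the edge $e_i$. Writing $\mathbf{m}_i$ for the inward unit normal of the great circle carrying $e_i$, one has $N\parallel \mathbf{m}_1\times\mathbf{m}_2$, and the bigon angle equals the equatorial distance $\alpha=|M_1M_2|$. Since $P$ is convex and each $e_i$ is a geodesic, the contact set $\sigma_i:=P\cap e_i$ is a closed subarc of $e_i$; and since $Big(P)$ cannot be shrunk, both $\sigma_i$ are nonempty (otherwise a free edge could be rotated about the $N$-axis to decrease $\alpha$). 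The assertion to prove is then exactly that each $\sigma_i$ \emph{straddles} $\ell$, i.e.\ $M_i$ lies in the relative interior of $\sigma_i$; equivalently, the endpoints $v,v'$ of the carrying edge satisfy $(v\cdot N)(v'\cdot N)<0$.

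The variational heart is the following computation. View $\alpha$ as a function of the axis $N\in S^{2}$, the two edges being the extreme supporting great circles of the pencil through $N$. On the open set of axes for which both contacts are \emph{vertices} $T_1,T_2$ of $P$, these vertices are locally constant and $\cos\alpha = g(a,b):=\dfrac{c-ab}{\sqrt{(1-a^{2})(1-b^{2})}}$, where $a=T_1\cdot N$, $b=T_2\cdot N$, $c=T_1\cdot T_2$. Differentiation gives $g_a=\dfrac{ac-b}{(1-a^{2})^{3/2}(1-b^{2})^{1/2}}$ and the symmetric $g_b$; since the equatorial projections $\Pi_N T_1,\Pi_N T_2$ are linearly independent whenever $0<\alpha<\pi$, stationarity $g_a\,\Pi_N T_1+g_b\,\Pi_N T_2=0$ forces $g_a=g_b=0$, whence $a=b=0$. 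Thus the only smooth critical configuration has both contacts \emph{on} $\ell$. However, the quadratic part of $g$ at $a=b=0$ is $\tfrac{c}{2}(a^{2}+b^{2})-ab$, whose Hessian has eigenvalues $c\pm1$ and is indefinite. Hence this configuration is a \emph{saddle} of $\alpha(N)$, never a local minimum. Consequently the inclusion-minimal bigon is attained at a \emph{non-smooth} axis, i.e.\ one where a supporting circle carries an entire edge $f_i$ of $P$; in particular the corresponding $\sigma_i$ is a genuine (nondegenerate) segment, which is the ``segment'' of the statement.

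It then remains to extract the midpoint property at such a corner. The breakpoint locus of $\alpha(N)$ attached to $f_i$ is the great circle $C_i$ carrying $f_i$ (the axis touches $f_i$ exactly when $N\in C_i$), and crossing $C_i$ switches the vertex contact between the two endpoints of $f_i$. I would analyse the one-sided derivatives of $\alpha$ at the corner $N_0=C_1\cap C_2$: if some contacted edge $f_i$ did \emph{not} straddle $\ell$ — say both its endpoints lay in the open hemisphere $\{x\cdot N_0>0\}$ — then the saddle descent direction found above (increase $a$, decrease $b$, i.e.\ the eigenvector $(1,-1)$ along which $g$ grows) is still an admissible tilt of $N_0$ keeping $P$ inside the bigon while strictly decreasing $\alpha$, contradicting minimality. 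Hence each contacted edge crosses $\ell$, its crossing point is $M_i$, and $M_i$ lies in the relative interior of $\sigma_i$.

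The step I expect to be the genuine obstacle is proving that \emph{both} contacts of the minimal bigon are edges (so that both segments are nondegenerate), rather than the planar-type optimum ``one edge and the opposite vertex'': the Hessian argument shows the minimizer is a corner, but a priori it could be an ``edge $+$ vertex'' corner, whose contact on the vertex side is a single point and for which the conclusion fails. This is precisely where the \emph{skew} hypothesis must enter. I would first settle the triangle case by feeding the explicit inequalities of Lemma~\ref{lemSkewTria} into the formula for $\alpha$, checking directly that the two edges flanking the obtuse angle are the minimizing pair and that each straddles $\ell$, and then reduce a general skew polygon to this model. A secondary technical point is that $\alpha(N)$ is only piecewise analytic, so the inequalities above must be read as one-sided (Clarke) derivative conditions, and genericity should be invoked to ensure that the inclusion-minimal bigon is unique and that no supporting circle meets an unexpected vertex of $P$.
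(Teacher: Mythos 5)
The portion of your argument that you actually carry out is sound, and it corresponds to the steps the paper disposes of with the words ``a rotation decreases the bigon'': your computation that a smooth vertex--vertex contact forces $a=b=0$ and that the Hessian there has eigenvalues $c\pm1$, hence a saddle, is a correct (and more explicit) version of the paper's case in which $P$ meets each edge of $Big(P)$ in a single point. Two problems remain, one of rigor and one of substance. The rigor issue: in the edge--edge case you justify straddling by invoking ``the saddle descent direction found above,'' but that direction was produced by a second-order analysis at a critical point of a single smooth branch with $a=b=0$; the corner you are perturbing is not a critical point of any individual branch, and the correct tool there is the first-order condition for a local minimum of a maximum of smooth functions (no common descent direction for all active pairs $\alpha_{v,w}$). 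This is repairable. The substantive gap is the one you flag yourself: the ``edge $+$ vertex'' corner. This is not a technical residue of the variational analysis --- it is the entire content of the lemma, because it is the only place the skew hypothesis can act, and it \emph{cannot} be eliminated by any descent argument. On the sphere, the distance from a point to a geodesic is locally \emph{maximized} (not minimized) at the foot of the perpendicular as soon as that distance exceeds $\pi/2$; consequently, once the bigon angle is acute, an axis for which one supporting circle meets $P$ only at a vertex lying on the equator while the other contains an edge satisfies both first- and second-order minimality conditions, and for suitable non-skew polygons it really is the minimal bigon. So the lemma is false without skewness precisely in this case, your proposed fix (verify triangles via Lemma~\ref{lemSkewTria}, then ``reduce a general skew polygon to this model'') is not carried out, and no such reduction is apparent.

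What the paper does at this point is a direct construction, and it is the piece your proposal is missing. It first shows that the midpoints $R$ and $T$ of the two edges of $Big(P)$ lie on $\partial P$ (the rigorous form of your straddling step). Then, in the problematic case where $P\cap l_1=\{R\}$ is a single point while $P\cap l_2$ is a segment (necessarily containing $T$), it takes $X$ to be the midpoint of the segment $RT\subset P$ and argues: the whole bigon lies in the closed hemisphere centered at $X$, so $X\in Core(P)$ and every maximum of $SQD^{sph}_X$ is short; $R$ is a local maximum and $T$ a local minimum of $SQD^{sph}_X$ with the \emph{same} value $(\alpha/2)^2$; hence on each of the two arcs of $\partial P$ joining $R$ to $T$ the function must attain a further interior local maximum. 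Together with $R$ this gives three short maxima, contradicting skewness. If you want to salvage your variational framework, you must graft exactly this (or an equivalent) skewness argument onto the edge--vertex corner; nothing in the bigon-angle calculus will produce it.
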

\begin{proof} Denote the edges of $Big(P)$ by $l_1$ and $l_2$, and their midpoints  by $R$ and $T$.
    Firstly, prove that $R$ and $T$ lie on $\partial P$. Indeed, if $R\notin \partial P$ a rotation of $l_1$ decreases the bigon.

\begin{figure}[h]
  \includegraphics[width=0.4\linewidth]{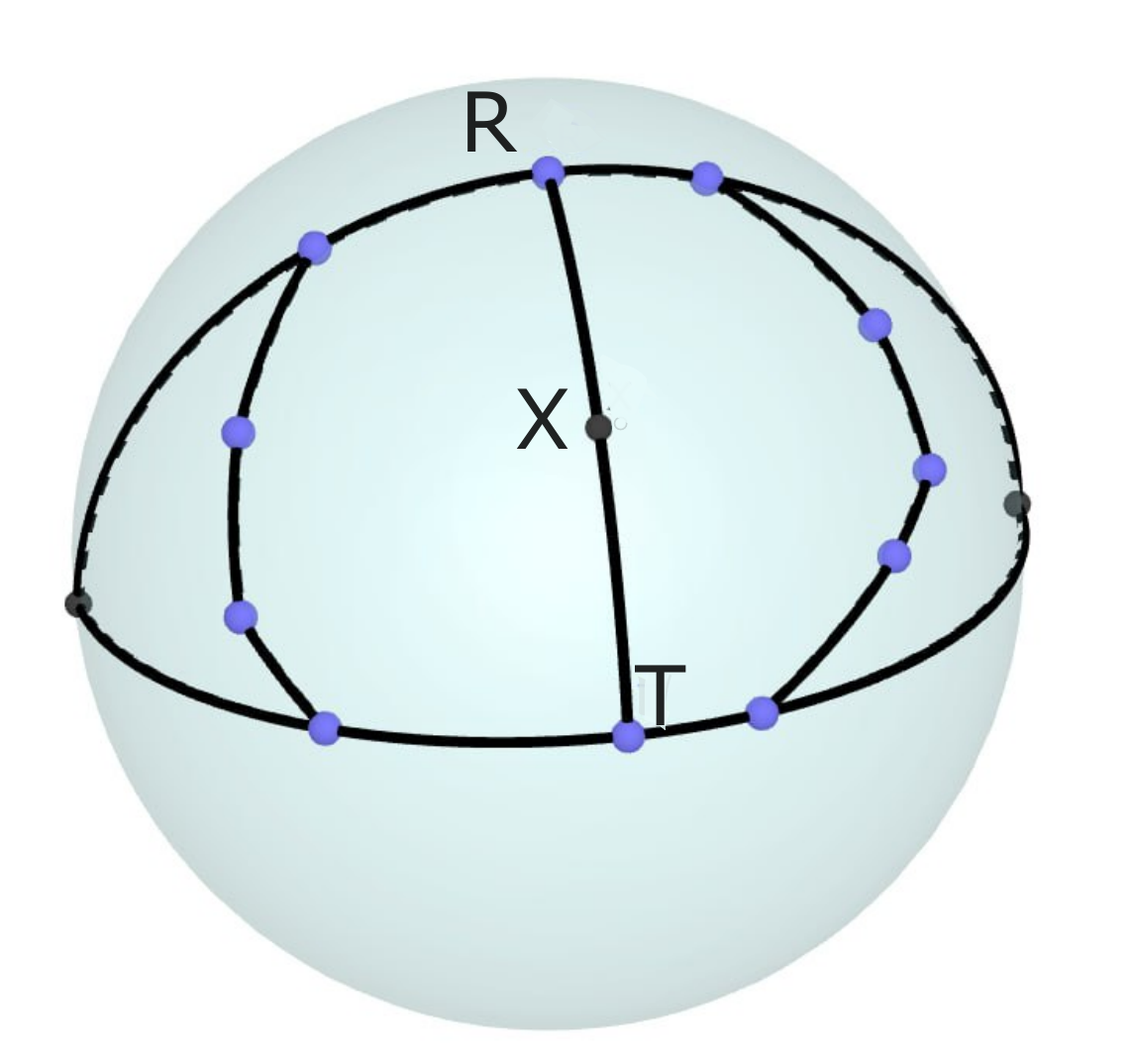}
  \caption {$P$ intersects $Big(P)$  by two segments, containing the midpoints $R$ and $T$. }
  \label{Fig1}
\end{figure}

    Next, prove that $P$ intersects each edge of $Big(P)$ by a segment. If not, two cases are possible:
    
    (1) $P$ intersects each of the edges by a point. Then a rotation  decreases $Big(P)$.

    (2) $P$ intersects one edge by a point, and the other edge by  a segment. Then necessarily the intersection point is the midpoint of the edge, say, $R$, and also the intersection segment contains $T$. Let us show  that  $P$ is not skew. Consider the segment $RT$; it lies in $P$ and cuts the bigon into two equal halves. Let $X$ be the midpoint of $RT$, then $X\in Core(P)$ and $SQD^{sph}_X$ has a short maximum at $R$ and a short minimum at $T$. Since $|XR| = |XT|$,  $SQD^{sph}_X$ has additionally at least two short maxima lying in different halves of bigon. Altogether we have 3 short maxima.  A contradiction. 
    
\end{proof}

\begin{cor} The midpoint $X$  of $Big(P)$ lies in $P$. Besides,  $X\in Core(P)$ and $SQD^{sph}_X$ has two (short) minima at $R$ and $T$. 
\end{cor}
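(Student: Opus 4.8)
The plan is to realize $X$ concretely in coordinates adapted to the bigon and then read off all three assertions from elementary spherical geometry. I place the minimal bigon $Big(P)$ so that its two vertices are the poles $N,S$ and its edges $l_1,l_2$ are the meridians at longitudes $0$ and $\alpha$, where $\alpha$ is the bigon angle. Note that $\alpha<\pi$: since $P$ is compact and contained in an \emph{open} hemisphere, it stays at positive distance from the boundary great circle, so a bigon of angle strictly less than $\pi$ already contains $P$. In these coordinates the midpoints $R,T$ are the equatorial points at longitudes $0$ and $\alpha$, and the center of the bigon is the equatorial point at longitude $\alpha/2$, which is exactly the midpoint of the geodesic $RT$; this is the point $X$ of the statement.

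First I would establish $X\in Int(P)$, which is the assertion that needs genuine input and is the step I expect to be the main obstacle. By Lemma \ref{MinBigonLemma} the set $l_i\cap P$ is a segment containing $R$ (resp.\ $T$) in its relative interior; since $l_i$ is a support geodesic of $P$, this segment is an edge of $P$, so $R$ and $T$ lie in the relative interiors of two \emph{distinct} edges (distinct because $l_1\neq l_2$). Consequently the open geodesic arc $RT$ is not contained in $\partial P$, hence lies in $Int(P)$; as $X$ is its midpoint, $X\in Int(P)\subset P$, which gives the first assertion.

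Next I would prove $X\in Core(P)$ via the distance estimate $|XZ|\le\pi/2$ for every $Z\in\partial P$. Since $\partial P\subset Big(P)$, it suffices to bound $|XZ|$ as $Z$ ranges over the whole bigon. For a point $Z$ with latitude $\theta$ and longitude $\phi\in[0,\alpha]$, the spherical law of cosines with $X$ on the equator gives $\cos|XZ|=\cos\theta\,\cos(\phi-\alpha/2)$. Both factors are nonnegative on the bigon (here $\alpha/2\le\pi/2$ is used), so $|XZ|\le\pi/2$, with equality forced only at the poles. As $\partial P$ avoids the poles, we in fact get $|XZ|<\pi/2$; combined with $X\in Int(P)$ this yields $X\in Core(P)$.

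Finally, for the short minima, the foot of the perpendicular dropped from $X$ to the meridian $l_1$ is precisely $R$ (the equator meets $l_1$ orthogonally at $R$), at distance $\alpha/2$, and $R$ lies in the relative interior of the edge $l_1\cap P$. Hence $SQD^{sph}_X$ restricted to $\partial P$ has a local minimum at $R$, short because $|XR|=\alpha/2<\pi/2$; the identical argument applied to $l_2$ produces a short minimum at $T$. The remaining work beyond the interiority step consists only of the short trigonometric computations above in the normalized bigon, so I do not anticipate further difficulty.
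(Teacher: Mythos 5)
Your proof is correct and follows the route the paper intends: the corollary is stated there without proof, as an immediate consequence of Lemma \ref{MinBigonLemma}, and your normalized-coordinates argument is exactly the natural filling-in (convexity plus the fact that $R$ and $T$ lie on distinct edges gives $X\in Int(P)$, the law-of-cosines estimate on the lune gives the $\pi/2$ bound, and perpendicularity of the midline to the bigon edges at $R$ and $T$ gives the two short minima). One minor remark: your side claim that $\partial P$ avoids the poles is not actually needed, since the paper's definition of $Core(P)$ only requires $|XZ|\leq \pi/2$, so the non-strict bound from your computation already suffices, while the shortness of the minima follows from $|XR|=|XT|=\alpha/2<\pi/2$ alone.
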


\begin{prop} \label{propAcuteBigon}
   If $P$ is skew, then $Big(P)$ is acute-angled.
\end{prop}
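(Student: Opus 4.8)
\textbf{Plan for proving Proposition \ref{propAcuteBigon}.}
The goal is to show that the minimal bigon $Big(P)$ of a skew polygon $P$ has an acute angle. By definition, $Big(P)$ is the bigon containing $P$ that minimizes the angle $\alpha$ between its two edges $l_1$ and $l_2$; I want to rule out $\alpha \geq \pi/2$. The plan is to argue by contradiction: suppose $\alpha \geq \pi/2$, and use the rich structure already established in Lemma \ref{MinBigonLemma} and its Corollary to manufacture a point with three short maxima, contradicting skewness.

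The key leverage is the Corollary: the midpoint $X$ of $Big(P)$ lies in $Core(P)$ and $SQD^{sph}_X$ has short minima at the edge-midpoints $R$ and $T$. First I would recall that on $\partial P$ the maxima and minima of $SQD^{sph}_X$ alternate, so having two minima $R,T$ already forces (at least) two short maxima between them, one in each half of the bigon determined by the arc $RT$ through $X$. The idea is then to show that if the bigon angle $\alpha$ is too large (obtuse or right), a \emph{third} short maximum is forced, giving the contradiction. The natural mechanism: when $\alpha$ is large, the vertices of the bigon (the poles where $l_1$ and $l_2$ meet) are far from $X$, and the portions of $\partial P$ near these poles are pushed outward; a symmetry/convexity estimate should show that $SQD^{sph}_X$ cannot decrease monotonically from the two already-found maxima all the way down to a single shared minimum near a pole, so an extra maximum appears. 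Concretely, I would track the behavior of $SQD^{sph}_X$ along $\partial P$ as it passes near each bigon vertex and show that for $\alpha \geq \pi/2$ the distance from $X$ to these near-vertex points exceeds $\pi/2$ only if an additional short maximum has already been passed.

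The cleanest route is probably to exploit the reflective symmetry across the great circle through $X$, $R$, $T$ (the ``axis'' of the bigon): since $Big(P)$ is symmetric under this reflection and $X$ is on the axis, I can restrict attention to one half and analyze the one-variable restriction of $SQD^{sph}_X$ to the boundary arc from $R$ to $T$ inside one half-bigon. I would compute where this restriction attains its maximum and compare the value of $\alpha$ at which a single interior maximum splits into two. The spherical law of cosines relating $|XR|$, the edge direction, and the angle $\alpha$ should give an explicit threshold; I expect the critical value to be exactly $\pi/2$, so that $\alpha \geq \pi/2$ produces the extra maximum.

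The main obstacle I anticipate is handling the interaction between the bigon geometry and the actual polygon $P$: the boundary $\partial P$ lies strictly inside $Big(P)$ except where it touches the edges (near $R$ and $T$), so the locations of the short maxima of $SQD^{sph}_X$ are determined by vertices of $P$ whose positions are constrained but not pinned down by $Big(P)$ alone. I will need to argue that the extra maximum is genuinely \emph{short} (distance $<\pi/2$ from $X$), not merely a critical point, and that it is attained at a vertex of $P$ rather than lost to the boundary of the open hemisphere; this likely requires invoking the minimality of $Big(P)$ once more (any attempt to avoid the third maximum would let one rotate an edge and shrink the bigon, contradicting minimality) together with the convexity of $P$ to control the sign of the relevant derivative. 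Assembling these into a clean contradiction, rather than a case-ridden computation, will be the delicate part.
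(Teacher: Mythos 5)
Your setup matches the paper's starting point: argue by contradiction, use the midpoint $X$ of $Big(P)$, the short minima at $R$ and $T$, alternation of maxima and minima, and membership in $Core(P)$ to convert extra minima into short maxima. But the heart of your plan --- that an obtuse bigon angle forces a \emph{third} short maximum of $SQD^{sph}_X$, to be extracted via the reflective symmetry of the bigon and a law-of-cosines threshold at $\pi/2$ --- is left as an expectation, not a proof, and the mechanism you propose does not work as stated. The reflection across the great circle through $X$, $R$, $T$ is a symmetry of $Big(P)$ but not of $P$, so you cannot reduce to a one-variable analysis of a symmetric arc; and, more importantly, the single witness point $X$ is not sufficient: whether $\partial P$ acquires a third minimum visible from $X$ depends on where the vertices of $P$ sit inside the bigon, which the bigon angle alone does not control. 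This is exactly why the paper does not (and cannot) run the argument purely at $X$.

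What the paper actually does is combinatorial--geometric, not a threshold computation. Assuming $Big(P)$ obtuse-angled, it first shows (Lemma \ref{lemInciShort}) that no two \emph{short} edges of $P$ can share a vertex --- this uses an auxiliary point $Z$ built as the intersection of two perpendiculars at the endpoints of the two short edges, together with a separating-line argument when $Z\notin P$. This yields (Lemma \ref{lemAtLeast4}) that $P$ has at most $4$ vertices: with $5$ or more vertices some half of the bigon contains a long edge $AB$, and then $X$ does project onto $AB$, giving the third minimum of $SQD^{sph}_X$ and the contradiction. The remaining cases are handled separately: a triangle with all angles obtuse is not skew by the explicit criterion of Lemma \ref{lemSkewTria}, and for a quadrilateral the witness point is not $X$ but a point $S$ on the midline $RT$ chosen so that $S$ projects onto the edge $LM$ cutting off a corner triangle of the bigon. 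Your plan has no analogue of the adjacency lemma, no vertex-count reduction, and no provision for moving the witness point off $X$; without these, the claimed contradiction simply is not established. To repair your proposal you would need to either prove your threshold claim uniformly over all positions of $\partial P$ inside an obtuse bigon (which is false for the fixed point $X$, as the quadrilateral case shows), or adopt a case analysis of the paper's type.
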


\begin{proof}
    Assume the contrary: $Big(P)$ is obtuse-angled. Then all the angles of $P$ are also obtuse  (this follows from minimality).

    The proof is based on two lemmata.

A spherical segment  $AB$ is \textit{short} if $|AB|<\pi/2$. Otherwise, the segment is called \textit{long}.
\begin{definition}
  Let $A$ be a vertex of $P$. Set $\mathcal{AR}^{spher}(A)$ be the set of points $Y\subset P$ such that $A$ is a short maximum of $SQD_Y^{spher}$.
\end{definition}

    \begin{lemma}\label{lemInciShort}
       If $P$ is skew and $Big(P)$ is obtuse-angled, then no two short edges of $P$ share a vertex.
    \end{lemma}

    \begin{proof}
        Let short edges $AB$ and $AC$ be incident. Let the point $Z$  be as is depicted in Fig. \ref{Fig2.1}.
         That is, $Z$ is the intersection of two perpendiculars to the edges emanating from $B$ and $C$. We have: $|ZB|<\pi/2, |ZA|<\pi/2, |ZC|<\pi/2, Z\in \mathcal{AR}^{spher}(A)$. If  $Z\in P$, then  $P$ is not skew, since for point $Y$ lying close to $Z$, $SQD^{sph}_Y$ has $3$ short maxima. Indeed, shift $Z$ a little bit upwards. 
        
\begin{figure}[h]
  \includegraphics[width=0.4\linewidth]{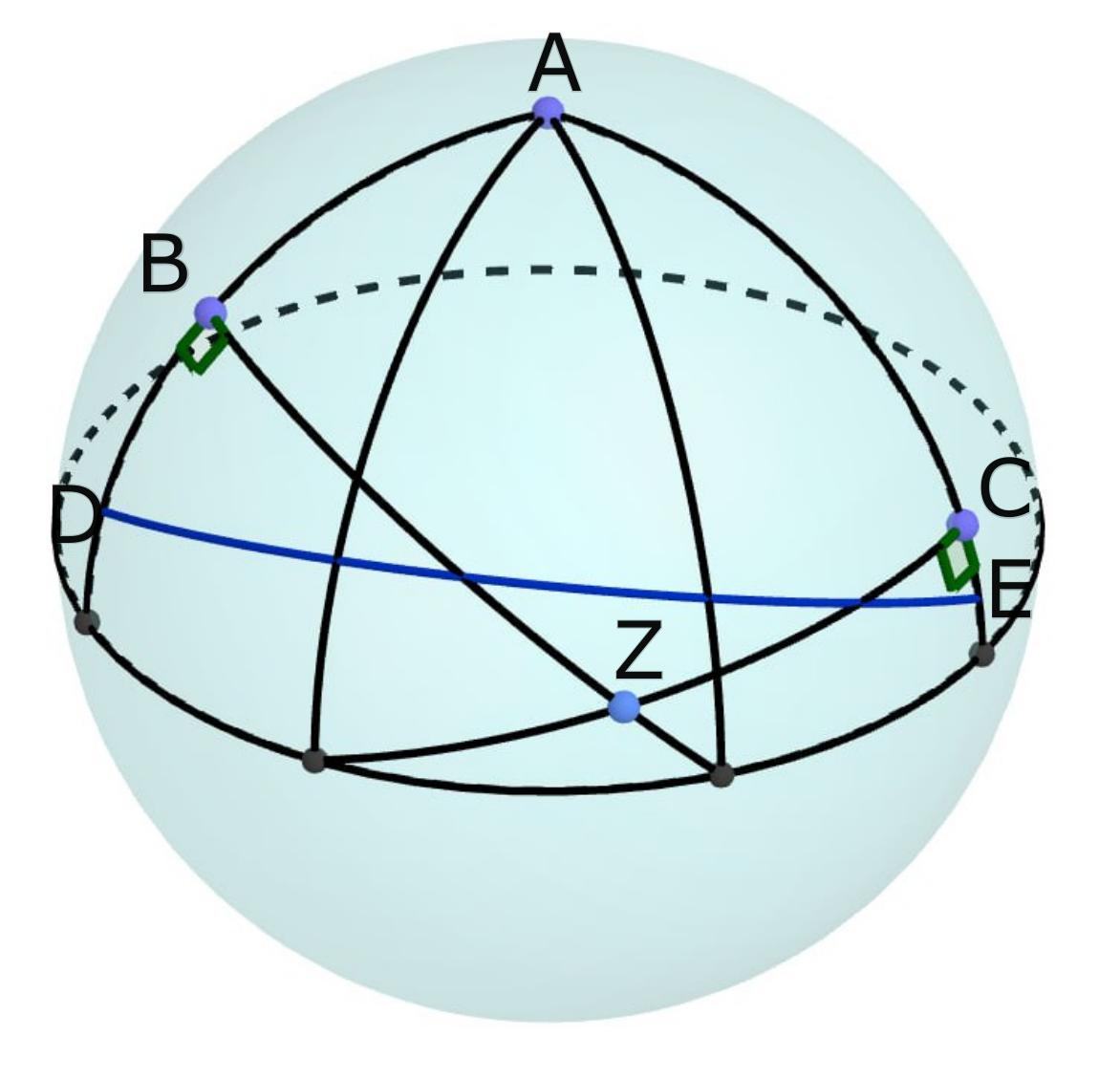}
  \caption {The point $Z$ and the line $l$ for the proof of Lemma \ref{lemInciShort} }
  \label{Fig2.1}
\end{figure}
       
        Assume that $Z\notin P$. Then there is a (geodesic) line $l$ that separates $Z$ and $P$. Consider the intersection points $D$ and $E$ of $l$ with the sides of the angle $A$.

        (1) If $D$ and $E$ lie in the upper hemisphere, then $P$ is contained in a quarter of the sphere, so the minimal bigon cannot be obtuse-angled.

        (2) Let $D$ lie in the bottom hemisphere. All angles in the triangle $ADE$  are obtuse, therefore the intersection point of $l$ with the polar line of $A$ doesn't lie in $\mathcal{AR}^{spher}(A)$. Then $Z\notin \mathcal{AR}^{spher}(A)$. A contradiction.
    \end{proof}

    \begin{lemma} \label{lemAtLeast4}
    If $P$ is skew and $Big(P)$ is obtuse-angled, then $P$ has at most $4$ vertices.
    \end{lemma}

    \begin{proof} The midline $RT$ cuts the bigon into two halves. If $P$ has at least 5 vertices, then by Lemma \ref{lemInciShort} there exists a long edge $AB$, contained in one half of the bigon.
For the midpoint $X$ of the bigon, we have: $|XA|<\pi/2, |XB|<\pi/2, |AB|>\pi/2$. Then $\angle XAB$ and $\angle XBA$ are acute, so $X$ projects on $AB$. Then $SQD^{sph}_X$ has at least 3 minima. Since $X\in Core(P)$,  the function $SQD^{sph}_X$ has at least 3 short maxima, so $P$ is not skew.

    \end{proof}

    It remains to analyze triangles and quadrilaterals.
    \begin{enumerate}
        \item $P$ is a triangle. All its angles are obtuse, so it's not skew by Lemma \ref{lemSkewTria}.
        \item $P$ is a quadrilateral. Consider the triangle $KLM$ as in Fig. \ref{Fig2}. It is a connected component of $Big(P)\setminus P$. We have: $|KL| < \pi/2$, $|KM| < \pi/2$, $|LM|< \pi/2$, $\angle KLM <\pi/2$, $\angle KML <\pi/2$, $\angle LKM >\pi/2$. Then spherical geometry tells us that $K$ projects to $LM$. Let $KH \perp LM$. Moreover, $\angle LKH<\pi/2$, $\angle MKH<\pi/2$. Let $S = KH \cap RT$. Then $|SR|<\pi/2$, $|ST|<\pi/2$. Therefore $S\in Core(P)$, and $SQD^{sph}_S$ has at least three (short) minima at $R,T,$ and $H$. Therefore there are three short maxima, so $P$ is not skew.

    \end{enumerate}  Proposition \ref{propAcuteBigon} is proven.
\end{proof}

\begin{figure}[h]
  \includegraphics[width=0.4\linewidth]{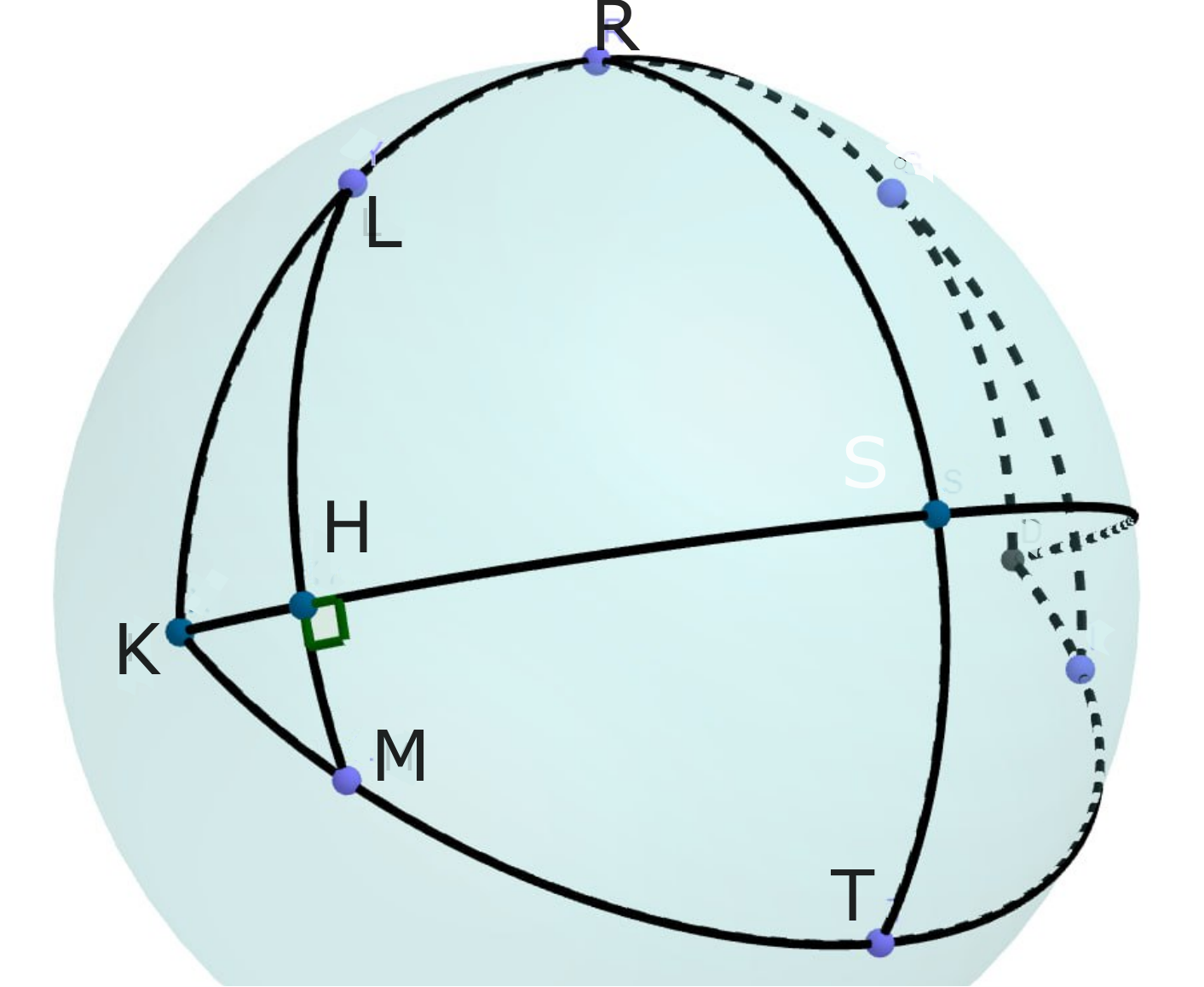}
  \caption {Case 2: $P$ is a quadrilateral. }
  \label{Fig2}
\end{figure}

  Proposition \ref{propAcuteBigon}  implies that the   midline $RT$ is contained in $Core(P)$. With this knowledge we proceed with the proof of Proposition \ref{propSkew}. So let $X$ be the midpoint of the minimal bigon $Big(P)$.

Denote by $M_1$ and $M_2$ maxima points of $SQD^{sph}_X$. They lie in different halves of the bigon.

\begin{lemma}\label{twoAcuteAngles}
$M_1$ and $M_2$ are acute vertices.
\end{lemma}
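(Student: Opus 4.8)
The plan is to argue by contradiction, showing that an obtuse angle at $M_1$ would make $P$ nice. First I would record the global picture produced by the previous results. Since $X\in Core(P)$, every maximum of $SQD^{sph}_X$ is automatically short, and since $P$ is skew there are at most two of them; as the two short minima $R$ and $T$ (from the Corollary) force at least two maxima, $SQD^{sph}_X$ has exactly two maxima $M_1,M_2$ and exactly two minima $R,T$, alternating around $\partial P$, with $M_1$ and $M_2$ in the two halves cut off by the midline $RT$. In particular each $M_i$ is a genuine vertex (short maxima are attained at vertices), and on the two boundary arcs running from $R$ up to $M_1$ and down to $T$ the function $SQD^{sph}_X$ is strictly monotone.

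Next I would assume, for contradiction, that the interior angle $\gamma_1$ of $P$ at $M_1$ satisfies $\gamma_1\ge \pi/2$, and denote by $A$ and $B$ the two neighbours of $M_1$ on $\partial P$ ($A$ on the $R$-side, $B$ on the $T$-side). The geometric mechanism I want to exploit is that an obtuse, hence flat, vertex $M_1$ cannot shadow its two neighbours: from a base point $Y$ placed just inside $P$ along the inward bisector at $A$, the vertex $A$ is always a short maximum, the opposite vertex $M_2$ remains a short maximum (it is the far tip, and $Y$ is not pushed toward $\partial P$), and, crucially, $B$ also becomes a short maximum. The last point is where $\gamma_1\ge\pi/2$ enters: because the edges $M_1A$ and $M_1B$ meet at an angle $\ge\pi/2$, the point $M_1$ sits in a valley as seen from $Y$, the perpendicular feet of $Y$ fall into the interiors of the long edges emanating from $A$ and from $B$ (these are the segments of $P$ lying on the sides $l_1,l_2$ of $Big(P)$, which are long because $Big(P)$ is acute by Proposition \ref{propAcuteBigon}), and consequently both edges at $B$ descend from $B$. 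This yields three short maxima $A,B,M_2$ from the single point $Y$, contradicting skewness.

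To turn this into a proof I would carry out the first-variation bookkeeping explicitly: a vertex $V$ is a short maximum of $SQD^{sph}_Y$ exactly when $Y$ lies in the wedge $\mathcal{AR}^{spher}(V)$ bounded by the inward perpendiculars to the two edges at $V$, and on the near side of the polar line of $V$ (that is, $|YV|<\pi/2$). I would then show that the chosen $Y$ lies simultaneously in $\mathcal{AR}^{spher}(A)$, $\mathcal{AR}^{spher}(B)$ and $\mathcal{AR}^{spher}(M_2)$. The quantitative input is the minimal-bigon data of the previous subsection: $|XR|=|XT|$ equals the half-angle of $Big(P)$, the points $R,T$ are the midpoints of the long sides $l_1,l_2$, and $X\in Core(P)$; together with $\gamma_1\ge\pi/2$ these pin down the relevant spherical triangles (for instance $M_1AB$, $YAM_1$ and $YBM_1$) well enough to verify the three wedge-membership inequalities.

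I expect the main obstacle to be precisely this verification in full generality, because $A$ and $B$ need not be the endpoints of the bigon-contact segments and $P$ may have many vertices. The cleanest way around it, mirroring Lemmas \ref{lemInciShort} and \ref{lemAtLeast4}, is first to bound the number of vertices that can occur (showing that a superfluous vertex between $M_1$ and a neighbour would itself already produce a third short maximum), and then to finish the remaining triangle and quadrilateral configurations by the explicit criterion of Lemma \ref{lemSkewTria}. The degenerate thresholds (an exact right angle at $M_1$, or a foot landing at an endpoint of $l_1,l_2$) are excluded by genericity and need no separate treatment.
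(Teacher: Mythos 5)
There is a genuine gap — in fact two. The engine of your argument is the claim that for a point $Y$ placed just inside $P$ near the neighbour $A$ of $M_1$, the three vertices $A$, $B$, $M_2$ are simultaneously short maxima of $SQD^{sph}_Y$. Only the claim about $A$ is immediate. For $M_2$ you give no argument at all: nothing guarantees either $|YM_2|<\pi/2$ or that $Y$ lies in the wedge $\mathcal{AR}^{spher}(M_2)$ (and the parenthetical ``$Y$ is not pushed toward $\partial P$'' is not reassuring, since $Y$ sits right next to the boundary vertex $A$). For $B$, your mechanism rests on the assertion that the edges of $P$ leaving $A$ and $B$ (other than $AM_1$, $BM_1$) are the contact segments of $P$ with the sides $l_1,l_2$ of $Big(P)$ and that these segments are long ``because $Big(P)$ is acute''; this is a non sequitur — acuteness of the bigon gives no lower bound on the length of the contact segments, and, as you yourself concede, $A$ and $B$ need not lie on the bigon at all. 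You identify exactly this verification as ``the main obstacle'', so what you have is a plan, not a proof. Worse, the fallback you propose for closing it — first bounding the number of vertices ``mirroring Lemmas \ref{lemInciShort} and \ref{lemAtLeast4}'', then finishing triangles and quadrilaterals via Lemma \ref{lemSkewTria} — is unavailable: both of those lemmas are stated and proved under the hypothesis that $Big(P)$ is \emph{obtuse}-angled, whereas at this stage Proposition \ref{propAcuteBigon} has established that $Big(P)$ is \emph{acute}-angled. No analogue of the vertex bound is proved anywhere in the paper for the acute case (the paper's proof of the present lemma needs none and works for any number of vertices), so this reduction would have to be built from scratch and may simply be false.

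For contrast, the paper's proof splits on the \emph{position} of the allegedly obtuse maximum $M_1$, a distinction absent from your proposal. If $M_1$ lies on an edge of $Big(P)$, one slides the observation point $X$ along the midline $RT$ toward that edge: $R$ and $T$ remain fixed short minima, hence no bifurcation occurs, yet once $X$ is close to $R$ an obtuse vertex $M_1$ cannot be a maximum of $SQD^{sph}_X$ — a contradiction. If $M_1$ lies in the interior of the bigon, one takes $Z$ to be the intersection of the perpendiculars to $M_1A$ at $A$ and to $M_1B$ at $B$, checks $|ZA|,|ZB|,|ZM_1|<\pi/2$ and $Z\in\mathcal{AR}^{spher}(M_1)$, and gets $Z\in P$ for free from convexity, since $Z\in conv(A,B,M_1,X)$; a point near $Z$ then has three short maxima at $A$, $B$, $M_1$ (not $M_2$), contradicting skewness. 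Your construction is closest in spirit to this second case, but it replaces the convexity argument by unverified metric claims and omits the first case entirely; both defects would need to be repaired.
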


\begin{proof}
Assume  that $M_1$ obtuse.
    Two cases are possible:
   
    \begin{enumerate}
        \item $M_1$ lies on an edge of the bigon. $M_1$ is clearly not a vertex of bigon  since the bigon is acute-angled.
        Let a point $X$ travels along the midline in the direction of the edge  containing $M_1$. The points $R$ and $T$ are short minima of $SQD^{sph}_X$ all the time. Since these minima are fixed, and we have not other minima, there are no bifurcations on the way. But if $X$ is close to $R$, and $M_1$ is an obtuse vertex, it can not be maximum of $SQD^{sph}_X$.

        \item $M_1$ lies in the interior of the bigon. Then incident edges $M_1A$ and $M_1B$ are short, and angle between them is obtuse. Consider the point $Z$ which is defined as the intersection of two perpendiculars, as in Fig. \ref{Fig3}. Clearly,  $|ZB|<\pi/2, |ZM_1|<\pi/2, |ZA|<\pi/2, Z\in \mathcal{AR}^{spher}(M_1)$. Also $X \in \mathcal{AR}^{spher}(M_1)$. We also have $Z\in conv(A,B,M_1,X)$. Then $Z\in P$, and $P$ is not skew.

    \end{enumerate}
\end{proof}

    \begin{figure}[h]
  \includegraphics[width=0.4\linewidth]{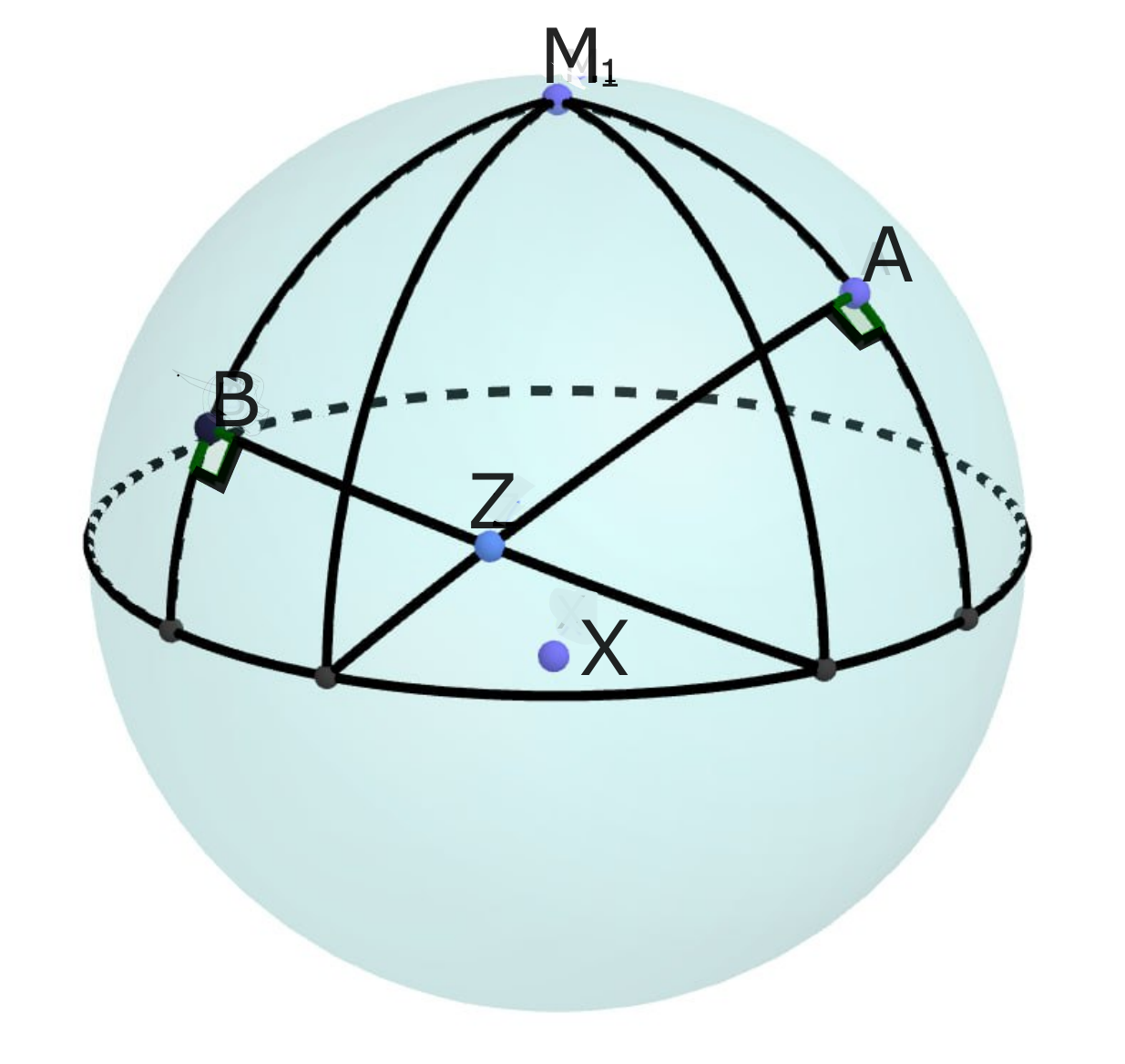}
  \caption {For the proof of Lemma \ref{twoAcuteAngles}}
  \label{Fig3}
\end{figure}

Clearly, there are no more acute vertices. Indeed, if a vertex is acute, it's a short maximal point of $SQD^{sph}_X$. Proposition \ref{propSkew}, (1) is proven.

\begin{lemma}\label{onetimes}
    Let $A$ be a vertex of $P$, $p\in P$ and $|M_1p|<\pi/2$, $|M_2p|<\pi/2$ ($M_1$ and $M_2$ are  the acute vertices). Then $|Ap|<\pi/2$.
\end{lemma}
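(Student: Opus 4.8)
The plan is to recast the statement in linear-algebraic form, observe that it is exactly the content of Proposition \ref{propSkew}(3), and then turn any hypothetical violation into a forbidden third short maximum.

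\textbf{Reformulation.} I would write the points of $P$ as unit vectors in $\mathbb{R}^3$, so that $|UV|<\pi/2$ means $\langle U,V\rangle>0$ and $\overline{H(W)}:=\{q:|Wq|\le\pi/2\}$ is a closed hemisphere. The claim then reads $P\cap H(M_1)\cap H(M_2)\subseteq\overline{H(A)}$ for every vertex $A$. This also gives the clean reduction of Proposition \ref{propSkew}(3): since $X\mapsto-\langle Y,X\rangle$ is the restriction of a linear functional, $\max_{X\in P}|YX|$ over the spherical-convex polygon $P$ is attained at a vertex, whence
$$Core(P)=\{Y\in Int\,P:\ |YV|\le\pi/2\ \text{ for every vertex }V\}.$$
So Lemma \ref{onetimes} says precisely that the two acute-vertex inequalities already cut out the whole core, i.e. it is Proposition \ref{propSkew}(3).

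\textbf{Setting up the contradiction.} Assume a vertex $A$ and a point $p\in P$ with $|M_1p|,|M_2p|<\pi/2$ but $|Ap|\ge\pi/2$; by Proposition \ref{propSkew}(1) the only acute vertices are $M_1,M_2$, so $A$ is obtuse. Recall from the corollary to Lemma \ref{MinBigonLemma} that the bigon centre $X$ lies in $Core(P)$ and that $SQD^{sph}_X$ has exactly two short minima $R,T$ and two short maxima $M_1,M_2$; these four points cut $\partial P$ into four arcs on which $|X\cdot|$ is monotone, and $A$ lies in the relative interior of one of them. The engine is the one used in Lemma \ref{lemAtLeast4} and in the quadrilateral case of Proposition \ref{propAcuteBigon}: if I produce a single point $Y\in Core(P)$ whose $SQD^{sph}_Y$ has at least three short minima, then, since $Y\in Core(P)$ forces every maximum to be short and maxima and minima alternate on the circle $\partial P$, there are three short maxima and $P$ is not skew, a contradiction. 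As noted after Definition \ref{dfnNicePoly}, a short minimum on an edge is the same as $Y$ projecting orthogonally onto that edge, so the concrete goal is to find $Y\in Core(P)$ projecting onto three distinct edges.

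\textbf{Producing the third short minimum.} Starting from the assumed far point $p$, I would move a test point $Y$ along the midline $RT$ (which lies in $Core(P)$ by Proposition \ref{propAcuteBigon}), keeping $R,T$ as two short minima, and use $|Ap|\ge\pi/2$ together with $|M_1M_2|>\pi/2$ from Proposition \ref{propSkew}(2) to force a third edge, on the far side of $A$, onto which $Y$ projects. Quantitatively, in the triangle $M_1pM_2$ the inequalities $|pM_i|<\pi/2$ and $|M_1M_2|>\pi/2$ give $\angle M_1pM_2>\pi/2$ by the spherical law of cosines, so $M_1,M_2$ sit on essentially opposite sides of $p$; combined with $|Ap|\ge\pi/2$ this locates an edge between $A$ and the $M_i$ that receives the projection of a suitable $Y$. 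Checking the projection and the core membership is a spherical-trigonometry computation of the same ``intersection of two perpendiculars'' type as in Figures \ref{Fig2.1} and \ref{Fig3}.

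\textbf{Main obstacle.} The difficulty is concentrated in this last step: one must run the construction through a case analysis according to which of the four monotone arcs contains $A$, and verify in each case that the chosen $Y$ simultaneously stays in $Core(P)$ (so that $|YR|,|YT|<\pi/2$ and all maxima remain short) and genuinely projects onto a third edge. The delicate subcases are those where $A$ is adjacent to $M_1$ or $M_2$, or where a polar circle $M_i^{\perp}$ crosses an edge of $P$ between $A$ and $p$; these must be disposed of separately. This bookkeeping is the technical heart flagged at the start of the section, whereas the reformulation and the ``three short minima'' reduction are routine.
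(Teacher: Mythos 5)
Your proposal has a fatal circularity within the paper's logical structure: in the step ``Producing the third short minimum'' you invoke $|M_1M_2|>\pi/2$, citing Proposition \ref{propSkew}(2). But part (2) of Proposition \ref{propSkew} is the \emph{last} statement proven in this section, and its proof consists precisely of applying Lemma \ref{onetimes} twice (once with $p=M_1$, once with $p=M_2$). So part (2) is downstream of the lemma you are proving and cannot be assumed; you would need an independent proof of $|M_1M_2|>\pi/2$, which you do not give. Separately, even granting that inequality, what you have written is a plan rather than a proof: the entire difficulty is concentrated in constructing a point $Y\in Core(P)$ that projects onto a third edge, and you explicitly defer this (``a case analysis \dots must be disposed of separately''), verifying in no case that such a $Y$ exists, stays in $Core(P)$, and genuinely projects onto an edge beyond $A$. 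The reduction ``three short minima for a core point $\Rightarrow$ three short maxima $\Rightarrow$ $P$ not skew'' is indeed the engine used elsewhere in the paper (Lemma \ref{lemAtLeast4}, the quadrilateral case of Proposition \ref{propAcuteBigon}), but here it is attached to a construction that is never carried out.

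The paper's own proof is direct, short, and needs neither contradiction nor part (2). Assume without loss of generality that $A$ lies in the same half of the bigon as $M_1$ (the halves being cut off by the midline $RT$). Since $Big(P)$ is acute-angled by Proposition \ref{propAcuteBigon}, each half has spherical diameter at most $\pi/2$, so $|M_1A|\le\pi/2$. Since the angle of $P$ at $M_1$ is acute (part (1), already proven) and $A,p\in P$ with $P$ convex, the angle satisfies $\angle AM_1p<\pi/2$. The spherical law of cosines then gives
$$\cos|Ap|=\cos|M_1A|\cos|M_1p|+\sin|M_1A|\sin|M_1p|\cos\angle AM_1p>0,$$
i.e.\ $|Ap|<\pi/2$. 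Note that this uses only the hypothesis $|M_1p|<\pi/2$ for the acute vertex lying in the same half as $A$; no global construction, no bifurcation-style argument, and no appeal to $|M_1M_2|>\pi/2$ is needed.
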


\begin{proof}
     Assume that $A$ lies in the same half of bigon as $M_1$. Then we have:$|M_1p|<\pi/2$, $|M_1A|\leqslant \pi/2$, $\angle AM_1p<\pi/2$, which implies $|Ap|<\pi/2$.
\end{proof}

The following lemma completes the proof  of  Proposition \ref{propSkew}.

\begin{lemma}
    $|M_1M_2|>\pi/2$.
\end{lemma}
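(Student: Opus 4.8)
The plan is to argue by contradiction. Assume $|M_1M_2|\le \pi/2$; genericity rules out equality, so in fact $|M_1M_2|<\pi/2$. The goal is to exhibit a point $S\in Core(P)$ at which $SQD^{sph}_S$ has three short minima: since $S\in Core(P)$ forces every maximum to be short, and minima and maxima alternate along $\partial P$, three short minima would force three short maxima, so $P$ would not be skew, the desired contradiction. Throughout I keep the coordinates adapted to the \emph{acute} minimal bigon (Proposition \ref{propAcuteBigon}): $X$ is the midpoint of $Big(P)$, the midline $RT$ lies in $Core(P)$, and by the Corollary following Proposition \ref{propAcuteBigon} the points $R,T$ are the two short minima while $M_1,M_2$ are the two short maxima of $SQD^{sph}_X$, lying in opposite halves of the bigon.

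First I would extract the distance information forced by the hypothesis. Because $M_1,M_2$ lie in opposite halves, the geodesic $M_1M_2$ is a chord of $P$ that meets the midline at a point $Q\in RT\subset Core(P)$, and $|M_1Q|+|QM_2|=|M_1M_2|<\pi/2$, so $|M_1Q|,|M_2Q|<\pi/2$. Feeding this into Lemma \ref{onetimes} (equivalently Proposition \ref{propSkew}(3), already available) shows that \emph{every} vertex of $P$ is within $\pi/2$ of $Q$. This is the control I will need to keep auxiliary points inside $Core(P)$ through the spherical law of cosines, exactly as in the proof of Lemma \ref{onetimes}.

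The heart of the proof is to manufacture the third short minimum. Here I would reuse the perpendicular-foot construction from the quadrilateral case of Proposition \ref{propAcuteBigon}: starting from a suitable vertex or bigon-corner lying "between" $M_1$ and $M_2$, drop the perpendicular onto a third edge of $P$ and let $S$ be its intersection with the midline $RT$. The hypothesis $|M_1M_2|<\pi/2$ is precisely what should guarantee that the foot $H$ lands strictly inside a third edge, distinct from the edges carrying $R$ and $T$; and the bounds $|SR|,|ST|<\pi/2$, obtained from $Q\in Core(P)$ together with the acuteness of $Big(P)$, put $S\in Core(P)$. Then $SQD^{sph}_S$ has short minima at $R$, $T$, and $H$, hence three short maxima, contradicting skewness.

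The step I expect to be the main obstacle is precisely locating this third edge and its perpendicular foot, and verifying that the resulting minimum is genuinely short and distinct from the minima at $R$ and $T$; as in Lemmas \ref{lemInciShort} and \ref{twoAcuteAngles}, this is likely to require a case analysis according to whether each of $M_1,M_2$ lies on an edge of $Big(P)$ or in its interior. A secondary, routine point is the containment $S\in Core(P)$, which I would obtain from the triangle inequalities in the spherical triangles $QRS$ and $QTS$ together with the distance bounds already established above.
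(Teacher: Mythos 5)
There is a genuine gap, and it sits exactly where you yourself flagged it: the ``heart of the proof'' --- manufacturing a third short minimum --- is never carried out. You do not say which vertex or bigon-corner the perpendicular is dropped from, which ``third edge'' receives it, why such an edge with the required position exists at all, why the foot $H$ lands in the \emph{interior} of that edge, why $H$ is distinct from $R$ and $T$, or why the perpendicular geodesic actually crosses the midline $RT$. The only link to the hypothesis is the assertion that $|M_1M_2|<\pi/2$ ``should'' guarantee these things. The analogy with the quadrilateral case of Proposition \ref{propAcuteBigon} does not transfer: there the construction worked because $Big(P)\setminus P$ had a triangular component $KLM$ whose side lengths and angles were explicitly controlled by the \emph{obtuse} bigon, whereas here the bigon is acute and no such component with controlled geometry is available. (A secondary slip: $|SR|,|ST|<\pi/2$ does not by itself ``put $S\in Core(P)$''; what saves this point is simply $S\in RT\subset Core(P)$, which you already have.) So what you wrote is a proof strategy with its crucial step missing, not a proof.

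What makes the gap frustrating is that your opening step nearly sets up the paper's actual (two-line) argument --- you just applied Lemma \ref{onetimes} at the wrong point. Apply it at $p=M_1$ and at $p=M_2$ (both admissible, since $|M_iM_i|=0$ and $|M_1M_2|<\pi/2$ by the contrary assumption): this gives $|AM_1|<\pi/2$ and $|AM_2|<\pi/2$ for \emph{every} vertex $A$ of $P$. Now pick any third vertex $A\notin\{M_1,M_2\}$ and a point $A'$ close to $A$ inside $\mathcal{AR}^{spher}(A)$. By part (1) of Proposition \ref{propSkew}, already proven at this stage (Lemma \ref{twoAcuteAngles}), $M_1$ and $M_2$ are \emph{acute} vertices; since $A'$ lies in the convex polygon $P$, the geodesic $M_iA'$ enters the acute angle at $M_i$, so it makes acute angles with both incident edges, and hence each $M_i$ is automatically a maximum of $SQD^{sph}_{A'}$ --- short, because $|M_iA'|<\pi/2$ by continuity. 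Together with the short maximum at $A$ this yields three short maxima, contradicting skewness. No minima, no perpendicular feet, and no case analysis on where $M_1,M_2$ sit are needed; the key fact your route ignores is precisely the acuteness of $M_1,M_2$, which converts distance bounds directly into short maxima.
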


\begin{proof}
    Assume the contrary, that is, $|M_1M_2|<\pi/2$. Apply twice Lemma \ref{onetimes} for $p = M_1$ and $p = M_2$ one by one. Then for any vertex $A$ we have: $|M_1A|<\pi/2, |M_2A|<\pi/2$. Select a point $A'$ near a vertex $A$, inside its spherical active region. So, $SQD^{sph}_{A'}$ has three short maxima at $M_1, M_2,$ and  $ A$. Therefore $P$ is not skew. A contradiction.
\end{proof}


\end{document}